\newcommand{\q}{\quad}
\theoremstyle{thmstyleone}%
\newtheorem{theorem}{Theorem}%  meant for continuous numbers
\theoremstyle{thmstyletwo}%
\newtheorem{remark}{Remark}%
\newtheorem{lemma}{Lemma}%
\newtheorem{assumption}{Assumption}%
\theoremstyle{thmstylethree}%
\begin{document}

\title[Article Title]{On solving nonlinear simultaneous equations arising from the double-exponential Sinc-collocation method for initial value problems}

%%=============================================================%%
%% GivenName	-> \fnm{Joergen W.}
%% Particle	-> \spfx{van der} -> surname prefix
%% FamilyName	-> \sur{Ploeg}
%% Suffix	-> \sfx{IV}
%% \author*[1,2]{\fnm{Joergen W.} \spfx{van der} \sur{Ploeg} 
%%  \sfx{IV}}\email{iauthor@gmail.com}
%%=============================================================%%

\author*[1]{\fnm{Yusaku} \sur{Yamamoto}}\email{yusaku.yamamoto@uec.ac.jp}
\author[2]{\fnm{Ken'ichiro} \sur{Tanaka}}\email{kenichiro@comp.isct.ac.jp}
\equalcont{These authors contributed equally to this work.}

\affil*[1]{\orgdiv{Department of Computer and Network Engineering, Graduate School of Informatics and Engineering}, \orgname{The University of Electro-Communications}, \orgaddress{\street{1-5-1 Chofugaoka}, \city{Chofu}, \state{Tokyo}, \postcode{182-8585}, \country{Japan}}}
\affil[2]{\orgdiv{Department of Mathematical and Computing Science, School of Computing}, \orgname{Institute of Science Tokyo}, \orgaddress{\street{2-12-1 Ookayama}, \city{Meguro-ku}, \state{Tokyo}, \postcode{152-8550}, \country{Japan}}}

%%==================================%%
%% Sample for unstructured abstract %%
%%==================================%%

\abstract{The double-exponential Sinc-collocation method is known as a super-accurate method for solving initial value problems of ordinary differential equations, for which the error decreases almost exponentially as a function of the number of sample points in the temporal direction, $N$. However, this method requires solving nonlinear simultaneous equations in $nN$ variables when the problem dimension is $n$. Recently, Ogata pointed out that Gauss-Seidel type fixed-point iteration works surprisingly well for solving these equations, typically reducing the error by one or two orders of magnitude at each iteration. In this paper, we analyze the convergence of this iteration and give a sufficient condition for its global convergence. We also provide an upper bound on its convergence factor, which explains the efficiency of this iteration. Some numerical examples that illustrate the validity of our analysis are also provided.}

\keywords{double-exponential integration formula, DE-Sinc collocation method, initial value problem, nonlinear Gauss-Seidel iteration convergence analysis}

%%\pacs[JEL Classification]{D8, H51}

\pacs[MSC Classification]{65L05, 65R20, 65D30, 65H10}

\maketitle

\section{Introduction}
In this paper, we consider numerical solution of the initial value problem
\begin{equation}
\frac{{\rm d}{\bm x}}{{\rm d}t}={\bm f}(t,{\bm x}) \quad (a\le t\le b), \quad {\bm x}(a)={\bm x}_a \in\mathbb{R}^n,
\label{eq:ODE}
\end{equation}
where ${\bm f}(t,{\bm x})\in\mathbb{R}^{n+1}\rightarrow\mathbb{R}^n$. For this problem, several super-accurate quadrature-based numerical methods, for which the error decays nearly exponentially as a function of the number of sample points $N$ in the interval $[a,b]$, have been proposed \cite{Tanaka23,Stenger93,Nurmuhammad05,Okayama18,Ogata23}. For example, the method of Stenger \cite{Stenger93}, which combines the Sinc-collocation method with the SE (single-exponential) transformation, achieves convergence rate of $O(\sqrt{N}\exp(-c\sqrt{N}))$, where $c$ is some constant. Nurmuhammad et al.~\cite{Nurmuhammad05} propose to combine the Sinc-Nystr\"{o}m method with the SE or DE (double-exponential) transformation. Theoretical analysis of these methods is given by Okayama \cite{Okayama18} for the special case where ${\bm f}(t,{\bm x})$ is a linear function in ${\bm x}$ and it is shown that the convergence rate is $O(\exp(-c\sqrt{N}))$ and $O(\frac{\log N}{N}\exp(-cN/\log N))$ for the SE and DE case, respectively. Note that the value of $c$ differs for each method. Okayama \cite{Okayama18} also proposes to replace the SE transformation in Stenger's method with the DE transformation and show that the convergence rate of the resulting method, the DE Sinc-collocation method, is $O(\exp(-cN/\log N))$. More recently, Ogata \cite{Ogata23} proposes to combine the periodic Sinc approximation with the IMT-type DE transformation. While the convergence of the resulting method is slightly slower ($O(N^{3/2}\log N\exp(-cN/\log^2 N))$) than those of the DE Sinc-Nystr\"{o}m and DE Sinc-collocation methods, it has the advantage that truncation of the integral domain required by the DE transformation is not needed. Thus, the parameter to specify truncation is eliminated and the user needs to specify only one parameter, $N$.

All of these methods reduce the initial value problem \eqref{eq:ODE} to a system of nonlinear equations in the values of ${\bm x}$ at $N$ sample points. Hence, nonlinear simultaneous equations in $nN$ variables have to be solved and this usually accounts for most of the computational work. Several approaches such as the Newton method and Gaussian elimination (in the linear case) \cite{Nurmuhammad05} can be used to solve these equations. Among them, Ogata \cite{Ogata23} proposes to use fixed-point iteration, which corresponds to a discrete version of the Picard iteration. There are several variants of this iteration which differ in the way of updating the variables, and he reports that the Gauss-Seidel type variant works surprisingly well, reducing the error by one or two orders of magnitude at each iteration for the test problems. While this iteration was devised for the IMT-DE Sinc-collocation method, it can be applied to other numerical methods mentioned above as well.

In this paper, we analyze the convergence of this Gauss-Seidel type fixed-point iteration. As a target numerical method, we choose the DE Sinc-collocation method, because it is conceptually simple and easier to analyze. We assume that ${\bm f}(t,{\bm x})$ is Lipschitz continuous in ${\bm x}$ and derive a bound on the convergence rate in terms of $N$, the interval length $b-a$ and the Lipschitz constant $L$. Contrary to the ordinary behavior of the Gauss-Seidel method, it is shown that the convergence gets faster as $N$ increases. This theoretical prediction is confirmed by numerical experiments.

The rest of this paper is structured as follows. In Section \ref{DE_Sinc}, we describe the DE Sinc-collocation method for initial value problems and the Gauss-Seidel type fixed-point iteration to solve the nonlinear simultaneous equations arising from it. Section \ref{Analysis} provides theoretical convergence analysis of the Gauss-Seidel type iteration. Numerical results that support our analysis are given in Section \ref{Results}. Finally Section \ref{Conclusion} gives some conclusion.

Throughout this paper, scalars, vectors and matrices are denoted by small letters (either Roman or Greek), bold italic letters and capital letters, respectively. $\|\cdot\|_{\infty}$ denotes the infinity-norm of a vector or a matrix. For a matrix $A=(a_{ij})$, $|A|$ denotes a matrix whose elements are $|a_{ij}|$. Inequalities between matrices mean elementwise inequalities. For $x>0$, $\log x$ means the natural logarithm of $x$.

\section{The DE Sinc-collocation method for initial value problems}
\label{DE_Sinc}
\subsection{Sinc-collocation methods with variable transformation}
Let us consider the initial value problem \eqref{eq:ODE}. We begin with describing a general procedure to solve this equation by the (periodic or non-periodic) Sinc-collocation methods with variable transformation. The first step is to rewrite it as an equivalent Volterra integral equation of the second kind:
\begin{equation}
{\bm x}(t)={\bm x}_a+\int_a^t{\bm f}(\tau,{\bm x}(\tau))\,{\rm d}\tau \quad (a\le t\le b).
\label{eq:integral_equation}
\end{equation}
Next, using some monotonically increasing function $\varphi$, we apply change of variables $\tau=\varphi(s)$ to the integral in \eqref{eq:integral_equation}, obtaining
\begin{equation}
{\bm x}(t)={\bm x}_a+\int_{\varphi^{-1}(a)}^{\varphi^{-1}(t)}{\bm f}(\varphi(s),{\bm x}(\varphi(s)))\varphi'(s)\,{\rm d}s \quad (a\le t\le b).
\label{eq:integral_equation2}
\end{equation}
Now, by using the function values at some sample points $s_{N_1}<s_{N_1+1}<\ldots<s_{N_2}$ on the $s$ axis, we interpolate the integrand in \eqref{eq:integral_equation2} as
\begin{equation}
{\bm f}(\varphi(s),{\bm x}(\varphi(s)))\varphi'(s) \simeq \sum_{j=N_1}^{N_2} {\bm f}(\varphi(s_j),{\bm x}(\varphi(s_j)))\varphi'(s_j)p_j(s),
\label{eq:interpolation}
\end{equation}
where $\{p_j(s)\}_{j=N_1}^{N_2}$ are some interpolation functions that satisfy $p_j(s_i)=\delta_{ij}$. Note that the function values ${\bm x}(\varphi(s_j))$ are unknown at this stage. Substituting \eqref{eq:interpolation} into \eqref{eq:integral_equation} and defining $P_j(s)$ by
\begin{equation}
P_j(s) \equiv \int_{\varphi^{-1}(a)}^s p_j(s')\,{\rm d}s',
\label{eq:pintegral}
\end{equation}
we can perform the integration to get
\begin{equation}
\int_{\varphi^{-1}(a)}^{\varphi^{-1}(t)}{\bm f}(\varphi(s),{\bm x}(\varphi(s)))\varphi'(s)\,{\rm d}s \simeq \sum_{j=N_1}^{N_2}{\bm f}(\varphi(s_j), {\bm x}(\varphi(s_j)))\varphi'(s_j)P_j(\varphi^{-1}(t)) \quad (a\le t\le b).
\label{eq:integral_equation3}
\end{equation}
Now, we insert the approximation \eqref{eq:integral_equation3} into \eqref{eq:integral_equation2} and equate the both sides at $N_2-N_1+1$ points $t=\varphi(s_1), \ldots, \varphi(s_M)$ (the collocation method). Then, rewriting ${\bm x}(\varphi(s_i))$ as $\widetilde{\bm x}_i$ ($i=N_1, N_1+1, \ldots, N_2$), we obtain the following nonlinear simultaneous equations in $\widetilde{\bm x}_{N_1}, \widetilde{\bm x}_{N_1+1}, \ldots \widetilde{\bm x}_{N_2}$.
\begin{equation}
\widetilde{\bm x}_i = {\bm x}_a + \sum_{j=N_1}^{N_2} {\bm f}(\varphi(s_j),\widetilde{\bm x}_j)\varphi'(s_j)P_j(s_i) \quad (i=N_1,\ldots,N_2).
\label{eq:nonlinear_equation}
\end{equation}
Solving these equations and substituting the results into \eqref{eq:integral_equation3} and \eqref{eq:integral_equation2}, we obtain the following approximate solution of \eqref{eq:ODE}.
\begin{equation}
\widetilde{\bm x}(t)={\bm x}_a+\sum_{j=N_1}^{N_2}{\bm f}(\varphi(s_j), \widetilde{\bm x}_j)\varphi'(s_j)P_j(\varphi^{-1}(t)) \quad (a\le t\le b).
\label{eq:solution}
\end{equation}
Note that the approximate solution is given as a function of $t$, which is available at any point in $[a,b]$. This is one of the advantages of this type of numerical methods.

As examples of numerical methods based on this procedure, we can list the DE Sinc-collocation method by Okayama \cite{Okayama18} and the IMT-DE periodic Sinc-collocation method by Ogata \cite{Ogata23}. The former method uses the double-exponential transformation for $\varphi(s)$ and the sample points $s_j=jh$ ($j=N_{1},\ldots,N_{2}$), where $h > 0$ is the step size, and combines it with the Sinc interpolation formula. Thus,
\begin{align}
\varphi(s) &= \varphi_{\rm DE}(s) \equiv \frac{b-a}{2}\tanh\left(\frac{\pi}{2}\sinh(s)\right)+\frac{b+a}{2}, \label{eq:DE} \\
p_j(s) &= S(j,h)(s) \equiv {\rm sinc}\left(\frac{s-jh}{h}\right)\quad (j=N_{1},\ldots,N_{2}), \label{eq:sinc}
\end{align}
where
\begin{equation}
{\rm sinc}(x)=\begin{cases}
\sin(\pi x)/(\pi x), & x\ne 0, \\
1 & x=0.
\end{cases}
\end{equation}
In this case, $P_j(s)$ is given as
\begin{equation}
P_j(s) = J(j,h)(s) \equiv h\left\{\frac{1}{2}+\frac{1}{\pi}{\rm Si}\left(\frac{\pi(s-jh)}{h}\right)\right\}, \label{eq:Pjs}
\end{equation}
where ${\rm Si}(x)=\int_0^x(\sin t/t){\rm d}t$ is the sine integral. In the latter method, the IMT-DE type transformation is used as $\varphi(s)$ and the sample points are chosen as $s_j=jh$ ($j=0, 1, \ldots, 2N$, $h=1/(2N+1)$), 
where $N$ is a positive integer%
\footnote{In this case, we set $N_{1} = 0$ and $N_{2} = 2N$.}. 
For interpolation, the periodic Sinc function is used. Thus,

\begin{align}
\varphi(s) &= \varphi_{\rm IMT-DE}(s) \equiv \frac{b-a}{2}\tanh\left(A\sinh B\left(\frac{1}{1-s}-\frac{1}{s}\right)\right)+\frac{b+a}{2}, \label{eq:IMT-DE} \\
p_j(s) &= S^{({\rm p})}(j,h)(s) \equiv h\frac{\sin\left(\frac{\pi}{h}(s-jh)\right)}{\sin\left(\pi(s-jh)\right)} \quad (j=0, 1, \ldots, 2N). \label{eq:psinc}
\end{align}
For this method, $P_j(s)$ is given as
\begin{equation}
P_j^{({\rm p})}(s) = J^{({\rm p})}(j,h)(s) \equiv h\left\{s+\sum_{n=1}^N\frac{1}{n\pi}\left(\sin(2n\pi(s-jh))+\sin(2n\pi jh)\right)\right\}. \label{eq:Pjps}
\end{equation}
For these methods, it has been shown that under some technical conditions on ${\bm f}(t,{\bm x})$, the maximum error of the approximate solution $\widetilde{\bm x}(t)$ in the interval $a\le t\le b$ decreases almost exponentially with respect to 
%$N$ 
the number of the sample points \cite{Okayama18,Ogata23}.

\begin{remark}
In the above explanation, we constructed the approximate solution $\widetilde{\bm x}(t)$ for arbitrary $t$ by inserting $\{\widetilde{\bm x}_i\}_{i=N_1}^{N_2}$ into \eqref{eq:integral_equation3} and \eqref{eq:integral_equation2}. Alternatively, we can construct $\widetilde{\bm x}(t)$ by interpolating $\{\widetilde{\bm x}_i\}_{i=N_1}^{N_2}$ using the Sinc functions in the $s$-space. Strictly speaking, the former method is called the DE Sinc-Nystr\"{o}m method, while the latter is called the DE Sinc-collocation method \cite{Okayama18}. Both methods solve the same set of nonlinear equations \eqref{eq:nonlinear_equation}, so our analysis in the next section is applicable to both of them.
\end{remark}

\subsection{Solution of the nonlinear simultaneous equations}
In the numerical methods outlined in the previous subsection, most of the computation time is spent for solving the nonlinear simultaneous equations \eqref{eq:nonlinear_equation}. Hence, their efficient solution is of great importance. Several algorithms can be considered to solve \eqref{eq:nonlinear_equation}, such as the Newton method and fixed-point iteration. Among them, Ogata proposes to use a Jacobi type iteration and Gauss-Seidel type iteration \cite{Ogata23}. While he applies these iterations to the IMT-DE periodic Sinc-collocation method, they can be used for the DE Sinc-collocation method as well. Let the approximate solution of \eqref{eq:nonlinear_equation} at the $\nu$th iteration be denoted by $\widetilde{\bm x}_i^{(\nu)}$ and let the initial values be $\widetilde{\bm x}_i^{(0)}={\bm x}_a$ for $i=N_1,\ldots,N_2$. Also, let
\begin{equation}
w_{ij}\equiv\varphi'(s_j)P_j(s_i).
\label{eq:wij}
\end{equation}
Then, the recursion formula for the Jacobi type iteration reads
\begin{equation}
\widetilde{\bm x}_i^{(\nu+1)} = {\bm x}_a + \sum_{j=N_1}^{N_2} w_{ij}{\bm f}(\varphi(s_j),\widetilde{\bm x}_j^{(\nu)}) \quad (i=N_1,\ldots,N_2).
\label{eq:Jacobi}
\end{equation}
On the other hand, the Gauss-Sidel type iteration can be written as follows.
\begin{equation}
\widetilde{\bm x}_i^{(\nu+1)} = {\bm x}_a + \sum_{j=N_1}^{i-1} w_{ij}{\bm f}(\varphi(s_j),\widetilde{\bm x}_j^{(\nu+1)}) + \sum_{j=i}^{N_2} w_{ij}{\bm f}(\varphi(s_j),\widetilde{\bm x}_j^{(\nu)}) \quad (i=N_1,\ldots,N_2). \label{eq:Gauss-Seidel}
\end{equation}

Ogata points out that the Jacobi type iteration \eqref{eq:Jacobi} can be interpreted as a discrete version of the Picard iteration to solve the integral equation \eqref{eq:integral_equation}. Based on this observation, he analyzes the convergence of this iteration applied to the IMT-DE periodic Sinc-collocation method and concludes that the iterate $\widetilde{\bm x}_i^{(\nu)}$ converges almost superlinearly to the solution of \eqref{eq:nonlinear_equation} \cite{Ogata23}. For the Gauss-Seidel type iteration \eqref{eq:Gauss-Seidel}, Ogata does not provide an analysis, but reports that the convergence is extremely fast for the test problems, reducing the error by nearly two orders of magnitude at each iteration \cite{Ogata23}. In the next section, we will analyze the convergence of this Gauss-Seidel type iteration theoretically, focusing mainly on the DE Sinc-collocation method.

\section{Convergence analysis of the Gauss-Seidel type iteration for the DE Sinc-collocation method}
\label{Analysis}
It is well known in the context of iterative solution of linear systems that the Gauss-Seidel method is generally faster than the Jacobi method. For example, for a linear system arising from discretization of the 2-dimensional Poisson equation by the finite-difference method using 5-point stencil, if the coefficient matrix is consistently ordered, the convergence of the Gauss-Seidel method is twice as fast as that of the Jacobi method. On the other hand, in the present problem, the coefficient matrix $(w_{ij})$ is dense, so the above argument does not apply. In addition, the equation to be solved is generally nonlinear. Yet, the numerical results given in \cite{Ogata23} shows that the Gauss-Seidel type iteration is much more efficient than the Jacobi type iteration, requiring less than half the number of iterations of the latter. Hence, it should be of interest to elucidate the reason of this efficiency.

To start the analysis, we define the vector $\widetilde{\bm X}^{(\nu)}$, which is a collection of all the vectors at the $\nu$th iteration.
\begin{equation}
\widetilde{\bm X}^{(\nu)}=\left(\begin{array}{c}
\widetilde{\bm x}_{N_1}^{(\nu)} \\
\vdots \\
\widetilde{\bm x}_{N_2}^{(\nu)}
\end{array}
\right) \in\mathbb{R}^{(N_2-N_1+1)n}
\end{equation}
For a positive scalar $\rho$, we define a region $\mathcal{D}_{\rho}\subset\mathbb{R}\times\mathbb{R}^n$ by
\begin{equation}
\mathcal{D}_{\rho}=\{(t,{\bm x})\,|\,t\in[a,b], \|{\bm x}-{\bm x}_a\|_{\infty}\le\rho\}.
\end{equation}
Also, let
\begin{equation}
w = \max_{N_1\le i\le N_2}\sum_{j=N_1}^{N_2}|w_{ij}|.
\label{eq:wdef}
\end{equation}
We make the following assumption throughout this section.
\begin{assumption}
\label{Assumption3-1}
There exists $\rho>0$ such that the following conditions hold:
\begin{itemize}
\item[(i)] there exists a constant $M$ such that $\|{\bm f}(t,{\bm x})\|_{\infty}\le M$ holds for $\forall(t,{\bm x})\in\mathcal{D}_{\rho}$,
\item[(ii)] ${\bm f}(t,x)$ is Lipschitz continuous in $\mathcal{D}_{\rho}$, that is, there exists a constant $L>0$ such that
\begin{equation}
\|{\bm f}(t,{\bm x})-{\bm f}(t,{\bm x}')\|_{\infty}\le L\|{\bm x}-{\bm x}'\|_{\infty}, \q \forall(t,{\bm x}), (t,{\bm x}')\in \mathcal{D}_{\rho},
\label{eq:Lipschitz}
\end{equation}
\item[(iii)] the constant $M$ can be chosen so that $w\le \frac{\rho}{M}$.
\end{itemize}
\end{assumption}
Note that this is a standard assumption used to prove the existence and uniqueness of the solution of the initial value problem \eqref{eq:ODE}, except that $w$ is used instead of $b-a$ in condition (iii). As we will see at the end of subsection \ref{convergence_rate}, $w$ is roughly bounded by $b-a$ for the DE Sinc-collocation method. Thus, Assumption \ref{Assumption3-1} can be considered as a natural assumption. If the conditions (i) and (ii) hold but (iii) does not, one can consider shrinking the interval $[a,b]$ to $[a,b']$, where $a<b'<b$. Since $w$ depends linearly on $b-a$ (see the definition of $\varphi(s)$ and \eqref{eq:wij}), one can always fulfill condition (iii) by making $b'$ sufficiently close to $a$. Of course, conditions (i) and (ii) remain to hold since the region $\mathcal{D}_{\rho}$ shrinks.

We first prove the following lemma.
\begin{lemma}
\label{Lem3-2}
For $\nu=0, 1, 2, \ldots$ and $i=N_1, N_1+1, \ldots, N_2$, the vector $\tilde{\bm x}_i^{(\nu)}$ generated by the Gauss-Seidel type iteration \eqref{eq:Gauss-Seidel} satisfies $(t,\tilde{\bm x}_i^{(\nu)})\in\mathcal{D}_{\rho}$ for $\forall t\in[a,b]$.
\end{lemma}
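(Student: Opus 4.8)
The plan is to prove the statement by induction on the iteration index $\nu$, with a nested induction on the component index $i$ to accommodate the Gauss--Seidel structure. Observe first that the claim $(t,\widetilde{\bm x}_i^{(\nu)})\in\mathcal{D}_\rho$ for all $t\in[a,b]$ is, by the definition of $\mathcal{D}_\rho$, equivalent to the single inequality $\|\widetilde{\bm x}_i^{(\nu)}-{\bm x}_a\|_\infty\le\rho$, since $\mathcal{D}_\rho$ constrains only the ${\bm x}$-component. The base case $\nu=0$ is immediate because $\widetilde{\bm x}_i^{(0)}={\bm x}_a$ gives $\|\widetilde{\bm x}_i^{(0)}-{\bm x}_a\|_\infty=0\le\rho$.

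For the inductive step, I would assume $\|\widetilde{\bm x}_j^{(\nu)}-{\bm x}_a\|_\infty\le\rho$ for all $j$ (the outer hypothesis) and prove the bound for $\widetilde{\bm x}_i^{(\nu+1)}$ by an inner induction on $i$ running from $N_1$ to $N_2$. The point is that the recursion \eqref{eq:Gauss-Seidel} expresses $\widetilde{\bm x}_i^{(\nu+1)}$ through the already-updated values $\widetilde{\bm x}_j^{(\nu+1)}$ with $j<i$ and the old values $\widetilde{\bm x}_j^{(\nu)}$ with $j\ge i$. The inner inductive hypothesis supplies $\|\widetilde{\bm x}_j^{(\nu+1)}-{\bm x}_a\|_\infty\le\rho$ for $j<i$, and the outer hypothesis supplies the same bound for the old values. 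Consequently every argument $(\varphi(s_j),\widetilde{\bm x}_j^{(\cdot)})$ appearing on the right-hand side lies in $\mathcal{D}_\rho$, so condition (i) of Assumption~\ref{Assumption3-1} applies and each $\|{\bm f}(\varphi(s_j),\cdot)\|_\infty\le M$.

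From here the estimate is routine. Subtracting ${\bm x}_a$, applying the triangle inequality to \eqref{eq:Gauss-Seidel}, and using $\|{\bm f}\|_\infty\le M$ together with the definition \eqref{eq:wdef} of $w$ gives
\begin{equation}
\|\widetilde{\bm x}_i^{(\nu+1)}-{\bm x}_a\|_\infty \le \sum_{j=N_1}^{N_2}|w_{ij}|\,M \le wM \le \rho,
\end{equation}
where the last step is exactly condition (iii), $w\le\rho/M$. This closes the inner induction on $i$, hence the outer induction on $\nu$, completing the proof.

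I do not anticipate a genuine obstacle here: the lemma is an \emph{a~priori} boundedness statement whose only real subtlety is the bookkeeping forced by the Gauss--Seidel update, in which current-iteration and previous-iteration values are mixed within a single formula. The nested induction handles this cleanly, and the crucial inequality $wM\le\rho$ is built directly into Assumption~\ref{Assumption3-1}(iii), so no quantitative work beyond a single triangle-inequality estimate is required.
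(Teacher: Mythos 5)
Your proof is correct and takes essentially the same route as the paper: the paper organizes the argument as a single induction over the lexicographic total order $\prec$ on the index pairs $(\nu,i)$, which is exactly your nested induction (outer on $\nu$, inner on $i$), and the key estimate—triangle inequality, $\|{\bm f}\|_\infty\le M$ on $\mathcal{D}_\rho$, then $\sum_j|w_{ij}|\le w\le\rho/M$ from Assumption~\ref{Assumption3-1}(iii)—is identical. Your observation that the claim reduces to $\|\widetilde{\bm x}_i^{(\nu)}-{\bm x}_a\|_\infty\le\rho$ also mirrors the paper's introduction of the set $\mathcal{D}_\rho^{\prime}$.
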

\begin{proof}
Let $\mathcal{D}_{\rho}^{\prime}=\{{\bm x}\in\mathbb{R}^n\,|\,\|{\bm x}-{\bm x}_a\|_{\infty}\le\rho \}$. Also, let us introduce a total order $\prec$ among the index pairs $\mathcal{I}\equiv\{(\nu,i)\}_{i=N_1,\ldots,N_2}^{\nu=0,1,\ldots}$ so that $(\nu',i')\prec(\nu,i)$ when either $\nu'<\nu$ or $\nu'=\nu$ and $i'<i$ holds. Note that in the Gauss-Seidel type iteration \eqref{eq:Gauss-Seidel}, $\tilde{\bm x}_{i'}^{(\nu')} $ is used in the computation of $\tilde{\bm x}_i^{(\nu)}$ only when $(\nu',i')\in\mathcal{I}$ and $(\nu',i')\prec(\nu,i)$. We show $\tilde{\bm x}_i^{(\nu)}\in\mathcal{D}_{\rho}^{\prime}$ for $\forall(\nu,i)\in\mathcal{I}$ by induction on this total order. 

First, consider the cases of $\nu=0$ and $i\in\{N_1, N_1+1, \ldots, N_2\}$. In these cases, $\tilde{\bm x}_i^{(\nu)}\in\mathcal{D}_{\rho}^{\prime}$ holds trivially since $\tilde{\bm x}_i^{(0)}={\bm x}_a$ ($i=N_1,\ldots,N_2$).

Next, let $\nu\ge 1$ and $i\in\{N_1, N_1+1, \ldots, N_2\}$ and assume that $\tilde{\bm x}_{i'}^{(\nu')}\in\mathcal{D}_{\rho}^{\prime}$ for any $(\nu',i')\in\mathcal{I}$ with $(\nu',i')\prec(\nu,i)$. Note that this assumption holds when $\nu=0$ and $i=N_1$, which corresponds to the first vector computed in the Gauss-Seidel type iteration. By combining this assumption with Assumption \ref{Assumption3-1} and \eqref{eq:Gauss-Seidel}, we have
\begin{align}
\|\widetilde{\bm x}_i^{(\nu)}-{\bm x}_a\|_{\infty} &\le \sum_{j=N_1}^{i-1} |w_{ij}|\,\|{\bm f}(\varphi(s_j),\widetilde{\bm x}_j^{(\nu)})\|_{\infty} + \sum_{j=i}^{N_2} |w_{ij}|\,\|{\bm f}(\varphi(s_j),\widetilde{\bm x}_j^{(\nu-1)})\|_{\infty} \nonumber \\
&\le M\sum_{j=N_1}^{N_2}|w_{ij}| \nonumber \\
&\le \rho,
\end{align}
showing that $\tilde{\bm x}_i^{(\nu)} \in\mathcal{D}_{\rho}^{\prime}$.

Hence the induction is completed and we have $\tilde{\bm x}_i^{(\nu)} \in\mathcal{D}_{\rho}^{\prime}$ for $\forall(\nu,i)\in\mathcal{I}$. Thus, by definition, $(t,\tilde{\bm x}_i^{(\nu)})\in\mathcal{D}_{\rho}$ for $\forall t\in[a,b]$ and $\forall(\nu,i)\in\mathcal{I}$.
\end{proof}

In the following, we write $(w_{ij})=(\varphi'(s_j)P_j(s_i))\in\mathbb{R}^{(N_2-N_1+1)\times(N_2-N_1+1)}$ as
\begin{equation}
(w_{ij})=D+E+F,
\label{eq:DEFdefinition}
\end{equation}
where $D$, $E$ and $F$ are the diagonal part, the strictly lower triangular part and the strictly upper triangular part of $(w_{ij})$, respectively.

\subsection{Global convergence of the Gauss-Seidel type iteration}
First, we study global convergence property of the Gauss-Seidel type iteration \eqref{eq:Gauss-Seidel}. Consider an equation with $\nu$ replaced by $\nu-1$ in \eqref{eq:Gauss-Seidel} and subtract it from \eqref{eq:Gauss-Seidel} side by side. Then, we have
\begin{align}
\widetilde{\bm x}_i^{(\nu+1)}-\widetilde{\bm x}_i^{(\nu)} &= \sum_{j=N_1}^{i-1} w_{ij}\left\{{\bm f}(\varphi(s_j),\widetilde{\bm x}_j^{(\nu+1)})-{\bm f}(\varphi(s_j),\widetilde{\bm x}_j^{(\nu)})\right\} \nonumber \\
& \quad + \sum_{j=i}^{N_2} w_{ij}\left\{{\bm f}(\varphi(s_j),\widetilde{\bm x}_j^{(\nu)})-{\bm f}(\varphi(s_j),\widetilde{\bm x}_j^{(\nu-1)})\right\} \quad (i=N_1,\ldots,N_2).
\end{align}
Here, $(\varphi(s_j),\widetilde{\bm x}_j^{(\nu)})\in\mathcal{D}_{\rho}$ for $\nu=0,1,\ldots$ and $N_1\le j\le N_2$ by Lemma \ref{Lem3-2}. Thus, by taking the infinity norm of both sides and using the Lipschitz continuity \eqref{eq:Lipschitz}, we have
\begin{align}
\left\|\widetilde{\bm x}_i^{(\nu+1)}-\widetilde{\bm x}_i^{(\nu)}\right\|_{\infty} &\le \sum_{j=N_1}^{i-1} |w_{ij}|\left\|{\bm f}(\varphi(s_j),\widetilde{\bm x}_j^{(\nu+1)})-{\bm f}(\varphi(s_j),\widetilde{\bm x}_j^{(\nu)})\right\|_{\infty} \nonumber \\
& \quad + \sum_{j=i}^{N_2} |w_{ij}|\left\|{\bm f}(\varphi(s_j),\widetilde{\bm x}_j^{(\nu)})-{\bm f}(\varphi(s_j),\widetilde{\bm x}_j^{(\nu-1)})\right\|_{\infty} \nonumber \\
&\le L\sum_{j=N_1}^{i-1} |w_{ij}|\left\|\widetilde{\bm x}_j^{(\nu+1)}-\widetilde{\bm x}_j^{(\nu)}\right\|_{\infty} + L\sum_{j=i}^{N_2} |w_{ij}|\left\|\widetilde{\bm x}_j^{(\nu)}-\widetilde{\bm x}_j^{(\nu-1)}\right\|_{\infty} \nonumber \\
& \quad\quad (i=N_1,\ldots,N_2).
\label{eq:xinudiff}
\end{align}
By introducing a nonnegative vector ${\bm Z}^{(\nu)}$ by
\begin{equation}
{\bm Z}^{(\nu)}=\left(\begin{array}{c}
\left\|\widetilde{\bm x}_{N_1}^{(\nu)}-\widetilde{\bm x}_{N_1}^{(\nu-1)}\right\|_{\infty} \\
\vdots \\
\left\|\widetilde{\bm x}_{N_2}^{(\nu)}-\widetilde{\bm x}_{N_2}^{(\nu-1)}\right\|_{\infty}
\end{array}
\right) \in\mathbb{R}^{N_2-N_1+1},
\end{equation}
we can rewrite \eqref{eq:xinudiff} concisely as
\begin{equation}
{\bm Z}^{(\nu+1)}\le L|E|{\bm Z}^{(\nu+1)}+L(|D|+|F|){\bm Z}^{(\nu)}.
\label{eq:Znup1}
\end{equation}
Note that both sides of \eqref{eq:Znup1} are nonnegative. Also, since $L|E|$ is a nonngative strictly lower triangular matrix, we have
\begin{equation}
(I-L|E|)^{-1}=\sum_{k=0}^{N_2-N_1}(L|E|)^k \ge O.
\end{equation}
Thus, we can pre-multiply both sides of \eqref{eq:Znup1} by $(I-L|E|)^{-1}$ to obtain
\begin{equation}
(I-L|E|)^{-1}{\bm Z}^{(\nu+1)}\le(I-L|E|)^{-1}L|E|{\bm Z}^{(\nu+1)}+(I-L|E|)^{-1}L(|D|+|F|){\bm Z}^{(\nu)},
\end{equation}
or
\begin{equation}
{\bm Z}^{(\nu+1)}\le(I-L|E|)^{-1}L(|D|+|F|){\bm Z}^{(\nu)}.
\label{eq:Znup12}
\end{equation}
Letting
\begin{equation}
M_{\rm GS}\equiv (I-L|E|)^{-1}L(|D|+|F|)
\label{eq:MGSdefinition}
\end{equation}
and taking the infinity norm of both sides of \eqref{eq:Znup12} yields
\begin{equation}
\left\|{\bm Z}^{(\nu+1)}\right\|_{\infty} \le \left\|M_{\rm GS}\right\|_{\infty}\left\|{\bm Z}^{(\nu)}\right\|_{\infty}.
\end{equation}
Now, assume that $\|M_{\rm GS}\|_{\infty}<1$. Then,
\begin{equation}
\left\|{\bm Z}^{(\nu)}\right\|_{\infty} \le \left\|M_{\rm GS}\right\|_{\infty}^{\nu-1}\left\|{\bm Z}^{(1)}\right\|_{\infty} \rightarrow 0 \quad (\nu\rightarrow\infty).
\end{equation}
Using this, we have for $\mu>\nu\ge 0$,
\begin{align}
\left\|\widetilde{\bm X}^{(\mu)}-\widetilde{\bm X}^{(\nu)}\right\|_{\infty} &\le \sum_{k=\nu+1}^{\mu}\left\|\widetilde{\bm X}^{(k)}-\widetilde{\bm X}^{(k-1)}\right\|_{\infty} \nonumber \\
&= \sum_{k=\nu+1}^{\mu}\|{\bm Z}^{(k)}\|_{\infty} \nonumber \\
&\le \sum_{k=\nu+1}^{\mu}\|M_{\rm GS}\|_{\infty}^{k-1}\|{\bm Z}^{(1)}\|_{\infty} \nonumber \\
&= \|M_{\rm GS}\|_{\infty}^{\nu}\,\frac{1-\|M_{\rm GS}\|_{\infty}^{\mu-\nu}}{1-\|M_{\rm GS}\|_{\infty}}\,\|{\bm Z}^{(1)}\|_{\infty}
\label{eq:Cauchy}
\end{align}
Thus, $\left\|\widetilde{\bm X}^{(\mu)}-\widetilde{\bm X}^{(\nu)}\right\|_{\infty}\rightarrow 0$ when $\mu,\nu\rightarrow\infty$ and thus $\{\widetilde{\bm X}^{(\nu)}\}$ is a Cauchy sequence. Hence there exists $\widetilde{\bm X}^*\in\mathbb{R}^{(N_2-N_1+1)n}$ such that $\widetilde{\bm X}^{(\nu)}\rightarrow\widetilde{\bm X}^*$ as $\nu\rightarrow\infty$. $\widetilde{\bm X}^*$ is a fixed point of the Gauss-Seidel type iteration \eqref{eq:Gauss-Seidel} and therefore the solution to \eqref{eq:nonlinear_equation}. Furthermore, by taking the limit of $\mu\rightarrow\infty$ in \eqref{eq:Cauchy}, we have for $\nu=1, 2, \ldots$,
\begin{equation}
\left\|\widetilde{\bm X}^{(\nu)}-\widetilde{\bm X}^*\right\|_{\infty} \le \frac{\|M_{\rm GS}\|_{\infty}^{\nu}}{1-\|M_{\rm GS}\|_{\infty}}\,\|{\bm Z}^{(1)}\|_{\infty}.
\end{equation}
Summarizing the above arguments, we obtain the following theorem.
\begin{theorem}
\label{Th3-1}
Let $M_{\rm GS}$ be a matrix defined by \eqref{eq:DEFdefinition} and \eqref{eq:MGSdefinition}. If $\|M_{\rm GS}\|_{\infty}<1$, then the Gauss-Seidel type iteration \eqref{eq:Gauss-Seidel} converges and delivers the solution to \eqref{eq:nonlinear_equation}. The convergence is at least linear and an upper bound on the rate of convergence is given by $\|M_{\rm GS}\|_{\infty}$.
\end{theorem}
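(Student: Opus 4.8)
The plan is to control the consecutive differences of the iterates through a single nonnegative matrix recursion and then invoke the Cauchy criterion in $\mathbb{R}^{(N_2-N_1+1)n}$. First I would subtract the instance of \eqref{eq:Gauss-Seidel} at index $\nu-1$ from the instance at index $\nu$, isolating the increments $\widetilde{\bm x}_i^{(\nu+1)}-\widetilde{\bm x}_i^{(\nu)}$. Because Lemma \ref{Lem3-2} guarantees that every iterate stays in $\mathcal{D}_{\rho}$, the Lipschitz bound \eqref{eq:Lipschitz} applies termwise, converting differences of ${\bm f}$-values into differences of the $\widetilde{\bm x}_j$ with a factor $L$. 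Collecting the scalars $\|\widetilde{\bm x}_i^{(\nu)}-\widetilde{\bm x}_i^{(\nu-1)}\|_{\infty}$ into the nonnegative vector ${\bm Z}^{(\nu)}$ and using the splitting $(w_{ij})=D+E+F$, this produces the componentwise inequality ${\bm Z}^{(\nu+1)}\le L|E|{\bm Z}^{(\nu+1)}+L(|D|+|F|){\bm Z}^{(\nu)}$.

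The decisive step, and the one I expect to be the main subtlety, is turning this \emph{implicit} inequality---in which ${\bm Z}^{(\nu+1)}$ appears on both sides, since the Gauss-Seidel sweep reuses freshly updated components---into an explicit one. The key observation is that $L|E|$ is a nonnegative strictly lower triangular matrix, hence nilpotent, so its Neumann series terminates and $(I-L|E|)^{-1}=\sum_{k=0}^{N_2-N_1}(L|E|)^k\ge O$. Because this inverse is entrywise nonnegative, premultiplying the inequality by it preserves the inequality direction; after cancelling the $L|E|{\bm Z}^{(\nu+1)}$ contribution one arrives at ${\bm Z}^{(\nu+1)}\le M_{\rm GS}{\bm Z}^{(\nu)}$ with $M_{\rm GS}$ as in \eqref{eq:MGSdefinition}. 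Taking infinity norms and using submultiplicativity then yields $\|{\bm Z}^{(\nu+1)}\|_{\infty}\le\|M_{\rm GS}\|_{\infty}\|{\bm Z}^{(\nu)}\|_{\infty}$.

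With the contraction hypothesis $\|M_{\rm GS}\|_{\infty}<1$ the remainder is routine. Iterating the last bound shows $\|{\bm Z}^{(\nu)}\|_{\infty}\to 0$ geometrically, and since the increments satisfy $\|\widetilde{\bm X}^{(k)}-\widetilde{\bm X}^{(k-1)}\|_{\infty}=\|{\bm Z}^{(k)}\|_{\infty}$, a telescoping estimate dominated by a convergent geometric series shows that $\{\widetilde{\bm X}^{(\nu)}\}$ is a Cauchy sequence. Completeness of $\mathbb{R}^{(N_2-N_1+1)n}$ then delivers a limit $\widetilde{\bm X}^*$; passing to the limit in \eqref{eq:Gauss-Seidel}, whose right-hand side is continuous in the iterates since ${\bm f}$ is Lipschitz, identifies $\widetilde{\bm X}^*$ as a fixed point, i.e.\ a solution of \eqref{eq:nonlinear_equation}. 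Finally, letting the upper index tend to infinity in the Cauchy estimate gives the a posteriori bound $\|\widetilde{\bm X}^{(\nu)}-\widetilde{\bm X}^*\|_{\infty}\le\frac{\|M_{\rm GS}\|_{\infty}^{\nu}}{1-\|M_{\rm GS}\|_{\infty}}\|{\bm Z}^{(1)}\|_{\infty}$, exhibiting at least linear convergence with rate at most $\|M_{\rm GS}\|_{\infty}$, as claimed.
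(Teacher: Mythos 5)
Your proposal is correct and follows essentially the same route as the paper's own proof: the same difference recursion for ${\bm Z}^{(\nu)}$, the same use of the nilpotency of $L|E|$ to invert $(I-L|E|)$ nonnegatively and obtain ${\bm Z}^{(\nu+1)}\le M_{\rm GS}{\bm Z}^{(\nu)}$, and the same Cauchy-sequence argument with the telescoping geometric bound yielding the rate $\|M_{\rm GS}\|_{\infty}$. No gaps to report.
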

This theorem applies to both the DE Sinc-collocation method and the IMT-DE periodic Sinc-collocation method. Note that the matrix $(w_{ij})$, and therefore matrix $M_{\rm GS}$, differs for each method.

\subsection{Rate of convergence of the Gauss-Seidel type iteration for the DE Sinc-collocation method}
\label{convergence_rate}
Now we focus on the DE Sinc-collocation method and investigate the rate of convergence of the Gauss-Seidel type iteration \eqref{eq:Gauss-Seidel} in more detail.

We start by evaluating the infinity norm of $(I - L|E|)^{-1}$. 
To this end, 
we use the formula 
\begin{align}
(I - L |E|)^{-1} = \sum_{k=0}^{N_{2} - N_{1}} (L |E|)^{k}. 
\label{eq:formula_inverse_I_LE}
\end{align}
%To avoid overestimating the norm of the matrix in the right-hand side, 
%we estimate its entries directly instead of applying the triangle inequality. 
%To this end, 
To bound the right-hand side of this equality, 
we estimate the entries of the matrix $L|E|$. 

\begin{lemma}
\label{lem:L_e_ij_ub}
The following inequality holds:
\begin{align}
L |e_{ij}| 
\leq 
\frac{1.1L(b-a)}{2} \cdot \frac{ \frac{\pi}{2} \cosh(jh) }{ \cosh^{2}(\frac{\pi}{2} \sinh(jh)) } \cdot h
\qquad
(i > j).
\label{eq:L_e_i_j_bound}
\end{align}
\end{lemma}

\begin{proof}
Since $E$ is a part of $(w_{ij})=(\varphi'(s_j)P_j(s_i))$ with $i>j$ and $P_j(s_i)= h\left\{\frac{1}{2}+\frac{1}{\pi}{\rm Si}(\pi(i-j))\right\}>0$ when $i-j>0$, $E$ is a nonnegative matrix. 
Also, it holds that ${\rm Si}(\pi k)<{\rm Si}(\pi)<1.86$ for $k=2,3,\ldots$. 
Therefore inequality~\eqref{eq:L_e_i_j_bound} follows from 
\begin{align}
L\left| \varphi'(s_{j}) P_{j}(s_{i}) \right|
\leq
L \cdot 
\left|
\frac{b-a}{2} \cdot 
\frac{ \frac{\pi}{2} \cosh(jh) }{ \cosh^{2}(\frac{\pi}{2} \sinh(jh)) }
\right|
\cdot
h 
\cdot
\left| \frac{1}{2} + \frac{1}{\pi} \mathrm{Si}\left( \pi (i - j) \right) \right|.
\notag
\end{align}
\end{proof}

\noindent
Here we define 
\begin{align}
a_{j} 
:= 
\frac{1.1L(b-a)}{2} \cdot \frac{ \frac{\pi}{2} \cosh(jh) }{ \cosh^{2}(\frac{\pi}{2} \sinh(jh)) } \cdot h.
\label{eq:def_a_j}
\end{align}
By using $a_{j}$, 
we define a strictly lower triangular matrix $G = (g_{ij}) \in \mathbb{R}^{(N_{2} - N_{1} + 1) \times (N_{2} - N_{1} + 1)}$ by
\begin{align}
g_{ij} = 
\begin{cases}
0 & (i \leq j), \\
a_{j} & (i > j).
\end{cases} 
\label{eq:def_matrix_G}
\end{align}
Then, 
it follows from \eqref{eq:formula_inverse_I_LE} and Lemma~\ref{lem:L_e_ij_ub} that%
\footnote{
For non-negative square matrices $P = (p_{ij})$ and $Q = (q_{ij})$ with $p_{ij} \leq q_{ij} \ (\forall i,j)$, 
we have $\| P \|_{\infty} \leq \| Q \|_{\infty}$.  
This is because 
\(
\| P \|_{\infty} = \max_{i} \sum_{j} p_{ij}
\) 
and 
\(
\| Q \|_{\infty} = \max_{i} \sum_{j} q_{ij}
\).
}
\begin{align}
\left\| (I - L |E|)^{-1} \right\|_{\infty} 
\leq 
\left\| 
\sum_{k=0}^{N_{2} - N_{1}} G^{k}
\right\|_{\infty}.
\label{eq:key_est_inv_I_LE}
\end{align}
To bound the right-hand side of this inequality, 
we use the following lemma, 
which is proved in Section~\ref{sec:proof_norm_series_of_G}. 

\begin{lemma}
\label{thm:norm_series_of_G}
Let $G = (g_{ij})$ be the matrix defined by~\eqref{eq:def_matrix_G}. 
Then the following equality holds:
\begin{align}
\left\| 
\sum_{k=0}^{N_{2} - N_{1}} G^{k}
\right\|_{\infty}
=
\prod_{r = N_{1}}^{N_{2}-1} (1 + a_{r}).
\notag
\end{align}
\end{lemma}

\noindent
Then, 
it follows from \eqref{eq:key_est_inv_I_LE} and Lemma~\ref{thm:norm_series_of_G} that
\[
\| (I - L |E|)^{-1} \|_{\infty}
\leq 
\prod_{r = N_{1}}^{N_{2}-1} (1 + a_{r})
\leq 
\prod_{r = N_{1}}^{N_{2}-1} \exp(a_{r})
=
\exp \left(
\sum_{r = N_{1}}^{N_{2}-1} a_{r}
\right)
\leq 
\exp \left(
\sum_{r = N_{1}}^{N_{2}} a_{r}
\right). 
\]
The second inequality is owing to the general inequality $1+x \leq \exp(x)$ for any $x \in \mathbb{R}$. 
By the definition of $a_{j}$ in \eqref{eq:def_a_j}, 
we have
\begin{align}
\sum_{r = N_{1}}^{N_{2}} a_{r}
& =
\frac{1.1L(b-a)}{2} \cdot h
\sum_{r = N_{1}}^{N_{2}} \frac{ \frac{\pi}{2} \cosh(rh) }{ \cosh^{2}(\frac{\pi}{2} \sinh(rh)) }
\notag \\
& \leq
\frac{1.1L(b-a)}{2} \cdot h
\sum_{r = -\infty}^{\infty} \frac{ \frac{\pi}{2} \cosh(rh) }{ \cosh^{2}(\frac{\pi}{2} \sinh(rh)) }
\notag \\
& \leq
\frac{1.1L(b-a)}{2} 
\left(
h \cdot \frac{ \frac{\pi}{2} \cosh(0) }{ \cosh^{2}(\frac{\pi}{2} \sinh(0)) }
+ 
\int_{-\infty}^{\infty} \frac{\frac{\pi}{2}\cosh(s)}{\cosh^2\left(\frac{\pi}{2}\sinh(s)\right)}\,{\rm d}s
\right)
\nonumber \\
& = 
\frac{1.1L(b-a)}{2} \left( \frac{\pi}{2} h + \int_{-1}^1{\rm d}t \right) \nonumber \\
& = 
1.1L(b-a) \left( \frac{\pi}{4} h  + 1 \right)
\notag \\
& \leq
1.1L(b-a) ( h  + 1 ), 
\label{eq:bound_sum_a_j}
\end{align}
where the second inequality is owing to 
Lemma~\ref{Lem_diff_DE_monotone}. 
By combining the above arguments, we obtain
\begin{align}
\| (I - L |E|)^{-1} \|_{\infty}
\leq 
\exp \left(
1.1L(b-a) ( h  + 1 )
\right).
\label{eq:ImEinvNorm}
\end{align}

Next, we evaluate the infinity norm of $|D|+|F|$. First, from Lemma \ref{LemA-1} and the fact that ${\rm Si}(x)$ is an odd function, we have for $k=1,2, \ldots$,
\begin{equation}
\left|\frac{\pi}{2}+{\rm Si}(-k\pi)\right|<\frac{1}{k\pi}.
\label{eq:Sideviation}
\end{equation}
Using this, the sum of the elements of in the $i$th row of $|D|+|F|$ can be evaluated as follows.
\begin{align}
\sum_{j=N_{1}}^{N_{2}} (|d_{ij}|+|f_{ij}|) &= \sum_{j=i}^{N_{2}} |w_{ij}| \nonumber \\
&= \frac{(b-a)}{2}\sum_{j=i}^{N_{2}} \frac{\frac{\pi}{2}\cosh(jh)}{\cosh^2\left(\frac{\pi}{2}\sinh(jh)\right)}\cdot h\left|\frac{1}{2}+\frac{1}{\pi}{\rm Si}(\pi(i-j))\right| \nonumber \\
&\le \frac{(b-a)}{2}\cdot\frac{\pi}{2}h\sum_{j=i}^{N_{2}} \left|\frac{1}{2}+\frac{1}{\pi}{\rm Si}(\pi(i-j))\right| \nonumber \\
&< \frac{(b-a)}{2}\cdot\frac{\pi}{2}h\left\{\frac{1}{2}+\frac{1}{\pi}\sum_{j=i+1}^{N_{2}} \frac{1}{(j-i)\pi}\right\} \nonumber \\
&< \frac{(b-a)}{2}\cdot\frac{\pi}{2}h\left\{\frac{1}{2}+\frac{1}{\pi^2}(1+\log(N_{2}-i))\right\} \nonumber \\
&\le (b-a)h\left\{\frac{\pi}{8}+\frac{1}{4\pi}(1+\log(N_{2} - N_{1}))\right\} \q (i=N_{2},\ldots,N_{1}),
\label{eq:dpfbound2}
\end{align}
where in the first inequality, we used $\forall s\in\mathbb{R},\, \cosh(s)/\cosh^2\left(\frac{\pi}{2}\sinh(s)\right) \le 1$, which can be shown as
\begin{equation}
\cosh^2\left(\frac{\pi}{2}\sinh(s)\right) \ge \cosh\left(\frac{\pi}{2}\sinh(s)\right) \ge \cosh\left(\frac{\pi}{2}s\right) \ge \cosh(s).
\end{equation}
From \eqref{eq:dpfbound2}, we have
\begin{equation}
\|D+F\|_{\infty}=\max_{N_{1} \le i\le N_{2}}\sum_{j=N_{1}}^{N_{2}}(|d_{ij}|+|f_{ij}|) \le (b-a)h\left\{\frac{\pi}{8}+\frac{1}{4\pi}(1+\log(N_{2} - N_{1}))\right\}.
\label{eq:DpFNorm2}
\end{equation}

By taking the infinity norm of \eqref{eq:MGSdefinition} and inserting \eqref{eq:ImEinvNorm} and \eqref{eq:DpFNorm2}, we obtain the following theorem on the upper bound on the infinity norm of $M_{\rm GS}$.
\begin{theorem}
\label{Th3-2}
Suppose that Assumption \ref{Assumption3-1} 
%and $1.1L(b-a)<1$ 
hold. Then, for the DE Sinc-collocation method, the infinity norm of the matrix $M_{\rm GS}$ defined by \eqref{eq:MGSdefinition} is bounded as follows.
\begin{equation}
\|M_{\rm GS}\|_{\infty} 
\le 
\exp \left(
1.1L(b-a) ( h  + 1 )
\right)
L(b-a)h \left\{\frac{\pi}{8}+\frac{1}{4\pi}(1+\log(N_{2} - N_{1}))\right\}.
\label{eq:MGSNorm2}
\end{equation}
\end{theorem}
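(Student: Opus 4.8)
The plan is to obtain the bound \eqref{eq:MGSNorm2} by feeding the two norm estimates already derived above into the defining product for $M_{\rm GS}$. Recall from \eqref{eq:MGSdefinition} that
\begin{equation*}
M_{\rm GS} = (I-L|E|)^{-1}\,L\,(|D|+|F|),
\end{equation*}
so that $M_{\rm GS}$ is a product of the matrix $(I-L|E|)^{-1}$, the positive scalar $L$, and the matrix $|D|+|F|$. Since the infinity norm is submultiplicative and the factor $L$ is a positive scalar that may be pulled out in front, I would take infinity norms of both sides to obtain
\begin{equation*}
\|M_{\rm GS}\|_{\infty} \le \|(I-L|E|)^{-1}\|_{\infty}\cdot L\cdot \||D|+|F|\|_{\infty}.
\end{equation*}

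The next step is to identify the last factor with the quantity already estimated. Because $D$ is the diagonal part and $F$ the strictly upper triangular part of $(w_{ij})$, the two matrices have disjoint support; hence $|d_{ij}|+|f_{ij}|=|d_{ij}+f_{ij}|$ entrywise and therefore $\||D|+|F|\|_{\infty}=\|D+F\|_{\infty}$, which is exactly the row-sum quantity bounded in \eqref{eq:DpFNorm2}. Substituting the bound \eqref{eq:ImEinvNorm} for $\|(I-L|E|)^{-1}\|_{\infty}$ together with \eqref{eq:DpFNorm2} for $\|D+F\|_{\infty}$ then gives
\begin{equation*}
\|M_{\rm GS}\|_{\infty} \le \frac{1}{1-1.1L(b-a)}\cdot L\cdot (b-a)h\left\{\frac{\pi}{8}+\frac{1}{4\pi}(1+\log(2N))\right\},
\end{equation*}
and collecting the scalar factors yields precisely \eqref{eq:MGSNorm2}.

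Honestly, there is no genuine obstacle left inside the theorem itself: all the analytic work lies in the two estimates \eqref{eq:ImEinvNorm} and \eqref{eq:DpFNorm2}, which I treat as already established. The only points deserving care are, first, that the hypothesis $1.1L(b-a)<1$ is what guarantees $1-L\|E\|_{\infty}>0$ (via $\|E\|_{\infty}\le 1.1(b-a)$ from \eqref{eq:Ebound}), so that the Neumann-series bound \eqref{eq:ImEinvNorm} is both valid and strictly positive; and second, the elementary observation that the positivity of $L$ and the nonnegativity of $|E|$, $|D|$, $|F|$ let me apply submultiplicativity and homogeneity with no sign cancellation. Thus the proof reduces to a one-line combination of the preceding estimates, and the main thing to verify is simply that the constants are carried through correctly.
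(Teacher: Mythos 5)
Your proposal is correct and is essentially identical to the paper's own (very brief) proof, which likewise takes the infinity norm of \eqref{eq:MGSdefinition} and inserts the bounds \eqref{eq:ImEinvNorm} and \eqref{eq:DpFNorm2} via submultiplicativity. Your added remarks---that $\||D|+|F|\|_{\infty}=\|D+F\|_{\infty}$ because $D$ and $F$ have disjoint support, and that $1.1L(b-a)<1$ is what legitimizes the Neumann-series bound---are accurate details the paper leaves implicit.
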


It is to be noted that this bound does not depend on $n$, the dimension of ${\bm x}(t)$.
%This is in contrast to the similar bound for the Jacobi-type iteration (Picard iteration) given in \cite[Eq.~(4.11)]{Ogata23}.
When $L(b-a)=\frac{1}{2}$, 
$N = -N_{1} = N_{2} = 64$ and 
$h$ is set to $\log N/N$, for example, we have $\log(2N)\simeq 4.852$ and $h\simeq 0.06498$ and therefore
\begin{equation}
\|M_{\rm GS}\|_{\infty} 
\le 
\exp \left( 1.1\cdot \frac{1}{2} \cdot(0.06498+1) \right) \cdot
\frac{1}{2}\cdot 0.06498 \cdot
\left(\frac{\pi}{8}+\frac{1}{4\pi}(1+4.852))\right) 
\simeq 0.05010,
\label{eq:MGSboundsample}
\end{equation}
which means that the error is reduced to about $1/20$ per iteration. 
This is a very fast convergence rate compared with the usual convergence rate of the Gauss-Seidel method, which is typically $>0.99$ when solving the Poisson equation using the finite difference method (see, for example, \cite[Table 3.1]{Sugihara09}). Note also that $\|M_{\rm GS}\|_{\infty}$ {\it decreases} as $N$ increases, since when $h=O(\log N/N)$, the upper bound in \eqref{eq:MGSNorm2} is $O((\log N)^2/N)$. This is also in clear contrast to the Poisson case, for which $\|M_{\rm GS}\|_{\infty}$ {\it increases} with the matrix size.

Finally, we give a rough upper bound on $w$ defined by \eqref{eq:wdef}. 
To this end, we prove the following lemma, 
which is a straightforward consequence of Lemma~\ref{lem:L_e_ij_ub}. 

\begin{lemma}
\label{thm:Ebound}
The infinity norm of the matrix $E$ is bounded as follows: 
\[
\| E \|_{\infty} \leq 1.1(b-a) ( h  + 1 ). 
\]
\end{lemma}

\begin{proof}
From Lemma~\ref{lem:L_e_ij_ub} and~\eqref{eq:def_a_j}, we have
\begin{align}
\| E \|_{\infty} 
= 
\max_{N_{1} \leq i \leq N_{2}} \sum_{j = N_{1}}^{N_{2}} |e_{ij}| 
\leq
\max_{N_{1} \leq i \leq N_{2}} \sum_{j = N_{1}}^{N_{2}} \frac{a_{j}}{L} 
\leq 
1.1(b-a) ( h  + 1 )
\notag
\end{align}
in the same manner as \eqref{eq:bound_sum_a_j}. 
\end{proof}

\noindent
Using Lemma~\ref{thm:Ebound} and \eqref{eq:DpFNorm2} and noting that typically $h\ll 1$ and $h\log N\ll 1$, we have
\begin{equation}
w=\|(w_{ij})\|_{\infty} \le \|E\|_{\infty}+\|D+F\|_{\infty} \simeq \|E\|_{\infty} \le 1.1(b-a) ( h  + 1 ).
\end{equation}
Thus, $w$ is roughly bounded by $b-a$, as we insisted after Assumption \ref{Assumption3-1}.

\subsection{Intuitive explanation for the efficiency}
Lemma~\ref{thm:Ebound} and 
Eq.~\eqref{eq:DpFNorm2} suggest that the matrix $(w)_{ij}=D+E+F$ is almost lower triangular and this is the reason for the high efficiency of the Gauss-Seidel type iteration. We will explain intuitively why this nearly lower triangular structure arises.

Let us consider a simplified fictitious method where the DE transformation \eqref{eq:DE} is replaced by the identity transformation $\varphi(s)=s$ and $p_j(s)$ is a piecewise linear function with $p_j(s_i)=\delta_{ij}$. Then, it is easy to see that $P_j(s_i)=0$ for $i<j$ and the matrix $(w_{ij})$ is lower triangular. In fact, this method is nothing but the implicit Euler method and the lower triangularity means that the method respects causality, in the sense that it does not use the function values at the $j$th time step, where $j>i$, to compute the function values at the $i$th time step.

The DE Sinc-collocation method achieves very high accuracy by breaking this type of causality to a certain extent. But the weight of function values at future time steps ($P_j(s_i)$ for $i<j$) must be small because it is unnatural to use such information from a physical point of view. This explains the nearly lower triangular structure of $(w_{ij})$.

\section{Proof of Lemma~\ref{thm:norm_series_of_G}}
\label{sec:proof_norm_series_of_G}

The following lemma is a key to prove Lemma~\ref{thm:norm_series_of_G}. 

\begin{lemma}
\label{thm:calc_series_of_G}
Let $G = (g_{ij})$ be the matrix defined by~\eqref{eq:def_matrix_G}. 
In the case that $i>j$, we have
\begin{align}
\left(  \sum_{\ell=1}^{N_{2} - N_{1}} G^{\ell} \right)_{ij}
= a_j\prod_{r=j+1}^{i-1}(1+a_r).
\label{eq:calc_series_of_G}
\end{align}
When $i = j+1$, the right-hand side is regarded as $a_{j}$ by convention. 
In the case that $i\le j$, the entry is zero. 
\end{lemma}

\begin{proof}
Let $K = N_{2} - N_{1} + 1$. 
Since $G$ is a strictly lower triangular matrix, $G^{\ell}=O$ for $\ell\ge K$. Therefore, $(I-G)^{-1}$ can be expanded into a finite Neumann series, and the left-hand side of Eq.~\eqref{eq:calc_series_of_G} can be written as $(I-G)^{-1}-I$. Since subtracting $I$ does not change the strictly lower triangular elements, it suffices to find the explicit formula for the strictly lower triangular elements of $(I-G)^{-1}$. Note that from the strict lower triangularity of $G$, the left-hand side of Eq.~\eqref{eq:calc_series_of_G} is trivially 0 when $i\le j$.

Let $J\in\mathbb{R}^{K\times K}$ be a matrix whose lower bidiagonal elements are 1 ($(J)_{i+1,i}=1$) and all other elements are 0. This matrix has the following properties:
\begin{itemize}
\item $J^{\ell}$ ($1\le\ell\le K-1$) is a matrix with elements 1 only on the $\ell$-th lower subdiagonal ($(J^{\ell})_{j+\ell,j}=1$).
\item Let $D\in\mathbb{R}^{K\times K}$ be a diagonal matrix, and let $D^<$ be the matrix obtained by shifting all diagonal elements of $D$ one position up and to the left 
($(D^<)_{jj}=(D)_{j+1,j+1}$ for $1\le j\le K-1$, and $(D^<)_{KK}=0$). 
Similarly, 
let $D^{<^{k}}$ be the matrix obtained by repeated application of the shift $k$ times 
($(D^{<^{k}})_{jj}=(D)_{j+k,j+k}$ for $1\le j\le K-k$, and $(D^{<^{k}})_{jj}=0$ for $K-k+1 \le j\le K$). 
Then, $DJ=JD^<$ and $D^{<^{k}} J = J D^{<^{k+1}}$ hold.
\end{itemize}

We set $D={\rm diag}(a_{N_{1}},\ldots,a_{N_{2}-1},0)$. 
Since the $\ell$-th lower subdiagonal of $G$ is given by $J^{\ell}D$, 
we have $G=\sum_{\ell=1}^{K-1} J^{\ell}D$. 
Then we can transform $I-G$ as follows:
\begin{align}
I-G &= I-\sum_{\ell=1}^{K-1} J^{\ell}D \nonumber \\
&= I+D - \left(\sum_{\ell=0}^{K-1} J^{\ell}\right)D \nonumber \\
&= I+D-(I-J)^{-1}D \nonumber \\
&= I+D-(I-J)^{-1}( (I-J)D + JD ) \nonumber \\
&= I-(I-J)^{-1}JD \nonumber \\
&= (I-J)^{-1} ((I-J) - JD) \nonumber \\
&= (I-J)^{-1}(I-J(I+D)).
\end{align}
Consequently, we have
\begin{align}
(I-G)^{-1}&=(I-J(I+D))^{-1}(I-J) \nonumber \\
&= \left(\sum_{\ell=0}^{K-1}(J(I+D))^{\ell}\right)(I-J) \nonumber \\
&= \sum_{\ell=0}^{K-1}\underbrace{J(I+D)J(I+D)\cdots J(I+D)}_{\text{$\ell$ times}}-\sum_{\ell=0}^{K-1}\underbrace{J(I+D)J(I+D)\cdots J(I+D)}_{\text{$\ell$ times}}J \nonumber \\
&= \sum_{\ell=0}^{K-1}J^{\ell}(I+D^{<^{\ell-1}})\cdots(I+D^<)(I+D)-\sum_{\ell=0}^{K-1}J^{\ell+1}(I+D^{<^{\ell}})\cdots(I+D^{<^2})(I+D^<) \nonumber \\
&= I+\sum_{\ell=1}^{K-1}J^{\ell}(I+D^{<^{\ell-1}})\cdots(I+D^<)D.
\end{align}
In the fourth equality, we used $DJ=JD^<$ and $D^{<^{k}} J = J D^{<^{k+1}}$. 
If we define $a_j=0$ for $j\ge K$, then $(I+D^{<^{\ell-1}})\cdots(I+D^<)D$ is a diagonal matrix whose $(j,j)$ element is $a_j\prod_{r=j+1}^{j+\ell-1}(1+a_r)$. Therefore the $(j+\ell,j)$ element of $J^{\ell}(I+D^{<^{\ell-1}})\cdots(I+D^<)D$, which is the $(j+\ell,j)$ element of $(I-G)^{-1}$, is $a_j\prod_{r=j+1}^{j+\ell-1}(1+a_r)$. By setting $i>j$ and $\ell=i-j$, we obtain the $(i,j)$ element of $(I-G)^{-1}$, which is $a_j\prod_{r=j+1}^{i-1}(1+a_r)$. This completes the proof of Lemma \ref{thm:calc_series_of_G}.
\end{proof}

By using Lemma~\ref{thm:calc_series_of_G}, 
we prove Lemma~\ref{thm:norm_series_of_G} as follows.

\begin{proof}[Proof of Lemma~\ref{thm:norm_series_of_G}]
Because
\begin{align}
\left\| 
\sum_{k=0}^{N_{2} - N_{1}} G^{k}
\right\|_{\infty}
= 
\max_{N_{1} \leq i \leq N_{2}} \sum_{j=N_{1}}^{N_{2}} 
\left( \sum_{k=0}^{N_{2} - N_{1}} G^{k} \right)_{ij},
\label{eq:norm_series_of_G_expr}
\end{align}
we calculate the sum in the right-hand side. 
By Lemma \ref{thm:calc_series_of_G}, 
we have
\begin{align}
\sum_{j=N_{1}}^{N_{2}} 
\left( \sum_{k=0}^{N_{2} - N_{1}} G^{k} \right)_{ij}
& = 
\sum_{j=N_{1}}^{N_{2}} 
\left( I + \sum_{\ell=1}^{N_{2} - N_{1}} G^{\ell} \right)_{ij}
= 
\sum_{j=N_{1}}^{i} 
\left( I + \sum_{\ell=1}^{N_{2} - N_{1}} G^{\ell} \right)_{ij}
\notag \\
& =
1 + 
\sum_{j=N_{1}}^{i-1} 
\left( \sum_{\ell=1}^{N_{2} - N_{1}} G^{\ell} \right)_{ij}
= 
1 + 
\sum_{j=N_{1}}^{i-1} 
a_j\prod_{r=j+1}^{i-1}(1+a_r).
\label{eq:sum_series_of_G_expr}
\end{align}

To calculate this expression, 
we define
\begin{align}
R_{i} 
:= 
\sum_{j=N_{1}}^{i-1} 
a_j\prod_{r=j+1}^{i-1}(1+a_r)
\label{eq:def_R_i}
\end{align}
and introduce the partial products
\[
P_{N_{1}-1}:=1, 
\qquad
P_k:=\prod_{r=N_{1}}^{k}(1+a_r)
\qquad
(k=N_{1}, \ldots, N_{2}-1). 
\]
Then we have
\[
R_i
=
\sum_{j=N_{1}}^{i-1} a_{j} \frac{P_{i-1}}{P_j}
=
P_{i-1}\sum_{j=N_{1}}^{i-1}\frac{a_j}{P_j}.
\]
Furthermore, 
using $P_j=P_{j-1}(1+a_j)$, we have
\[
\frac{1}{P_{j-1}}-\frac{1}{P_j}
=\frac{(1+a_j)}{P_{j}}-\frac{1}{P_{j}}
=\frac{a_j}{P_j}.
\]
By combining these, we can deduce a simple expression of $R_{i}$ as follows:
\begin{align}
R_{i}
=
P_{i-1}
\sum_{j=N_{1}}^{i-1}\frac{a_j}{P_j}
=
P_{i-1}
\sum_{j=N_{1}}^{i-1}\left( \frac{1}{P_{j-1}}-\frac{1}{P_j} \right)
=
P_{i-1}
\left( 
\frac{1}{P_{N_{1}-1}}-\frac{1}{P_{i-1}}
\right)
=P_{i-1} - 1.
\label{eq:R_i_simple_expr}
\end{align}

From 
\eqref{eq:norm_series_of_G_expr}, 
\eqref{eq:sum_series_of_G_expr}, 
\eqref{eq:def_R_i}, and
\eqref{eq:R_i_simple_expr}, 
we have
\begin{align}
\left\| 
\sum_{k=0}^{N_{2} - N_{1}} G^{k}
\right\|_{\infty}
=
\max_{N_{1} \leq i \leq N_{2}} P_{i-1}
=
\max_{N_{1} \leq i \leq N_{2}} \prod_{r=N_{1}}^{i-1}(1+a_r)
=
\prod_{r=N_{1}}^{N_{2}-1}(1+a_r).
\notag
\end{align}
Hence we obtain the conclusion. 
\end{proof}

\section{Numerical examples}
\label{Results}
In this section, we present some numerical examples that confirm the validity of Theorems \ref{Th3-1} and \ref{Th3-2}. The program was written in FORTARN using double precision arithmetic, compiled with Intel Fortran Ver.~11.1 and executed on a workstation with the Xeon E5-2660 v2 CPU. The values of the ${\rm Si}(x)$ function were computed by WolframAlpha \cite{WolframAlpha}.

In the following, 
we assume that 
$N_{1} = -N$ and $N_{2} = N$ for a positive integer $N$.
Furthermore, 
${\bm x}(t)$ (or $x(t)$ in the scalar case) denotes the exact solution of the given initial value problem and $\widetilde{\bm x}_i^*$ (or $\widetilde{x}_i^*$) denotes the approximate solution by the DE Sinc-collocation method at the $i$th sample point $t_i=\varphi(ih)$ ($-N\le i\le N$), where $\varphi$ is defined by \eqref{eq:DE}. 
Note that $\widetilde{\bm x}_i^*$ is defined as the exact solution of the nonlinear simultaneous equations \eqref{eq:nonlinear_equation}. In the numerical experiments, $\widetilde{\bm x}_i^*$ was computed approximately by performing the Gauss-Seidel iteration 10 times. We evaluate two kinds of errors. The first one is the error of the numerical solution defined by
\begin{align}
E_1(N)=\max_{-N\le i\le N}\left\|\widetilde{\bm x}_i^*-{\bm x}(t_i)\right\|_{\infty},
\end{align}
The second one is the error of the Gauss-Seidel type iteration defined by
\begin{align}
E_2(\nu)=\left\|\widetilde{\bm x}_i^{(\nu)}-\widetilde{\bm x}_i^*\right\|_{\infty},
\end{align}
where $\widetilde{\bm x}_i^{(\nu)}$ is the $\nu$th iterate. We also investigate the actual value of $\|M_{\rm GS}\|_{\infty}$ as a function of $N$ and compare it with the bound \eqref{eq:MGSNorm2}. In all of the examples, the step size was chosen as $h=\log N/N$.

\paragraph{Example 1}
The first example is a simple linear ordinary differential equation with a scalar variable:
\begin{align}
\frac{{\rm d}x}{{\rm d}t}=x \quad (0\le t\le 1/2), \quad x(0)=1,
\label{eq:example1}
\end{align}
with the exact solution $x(t)=e^t$. In this case, Assumption \ref{Assumption3-1} is satisfied with $\rho=1$, $M=2$, $L=1$ and $w\simeq b-a=\frac{1}{2}\le \rho/M$. Also, it holds that $1.1L(b-a)=\frac{1.1}{2}<1$. Thus, we can apply Theorem \ref{Th3-2} with $h=\log N/N$ and obtain the bound $\|M_{\rm GS}\|_{\infty}<1$ for $N\ge 2$. Hence, convergence of the Gauss-Seidel type iteration is guaranteed by Theorem \ref{Th3-1}.

In Fig.~\ref{fig:Ex1-1}, we show the error $E_1(N)$ as a function of $N$. 
%The error decreases almost exponentially with $N$, in accordance with the convergence theory of the DE Sinc-collocation method \cite[Theorem 10]{Okayama18}.
For $N\ge 32$, the error decreases nearly to the roundoff error level, illustrating the power of the DE Sinc-collocation method. Fig.~\ref{fig:Ex1-2} shows the values of $\|M_{\rm GS}\|_{\infty}$ along with the bound \eqref{eq:MGSNorm2} as functions of $N$. The bound predicts the behavior of $\|M_{\rm GS}\|_{\infty}$ quite well, although it is about three times larger than the actual value. Convergence of the Gauss-Seidel type iteration is shown in Fig.~\ref{fig:Ex1-3} for the case of $N=64$. The convergence is linear, as expected from Theorem \ref{Th3-1}, and the error $E_2(\nu)$ gets reduced by almost two orders of magnitude after each iteration. This is roughly in agreement with the fact that $\|M_{\rm GS}\|_{\infty}\simeq 0.02$ in this case.

\begin{figure}[h]
\centering
\begin{minipage}[b]{0.47\columnwidth}
    \centering
    \includegraphics[width=0.9\columnwidth]{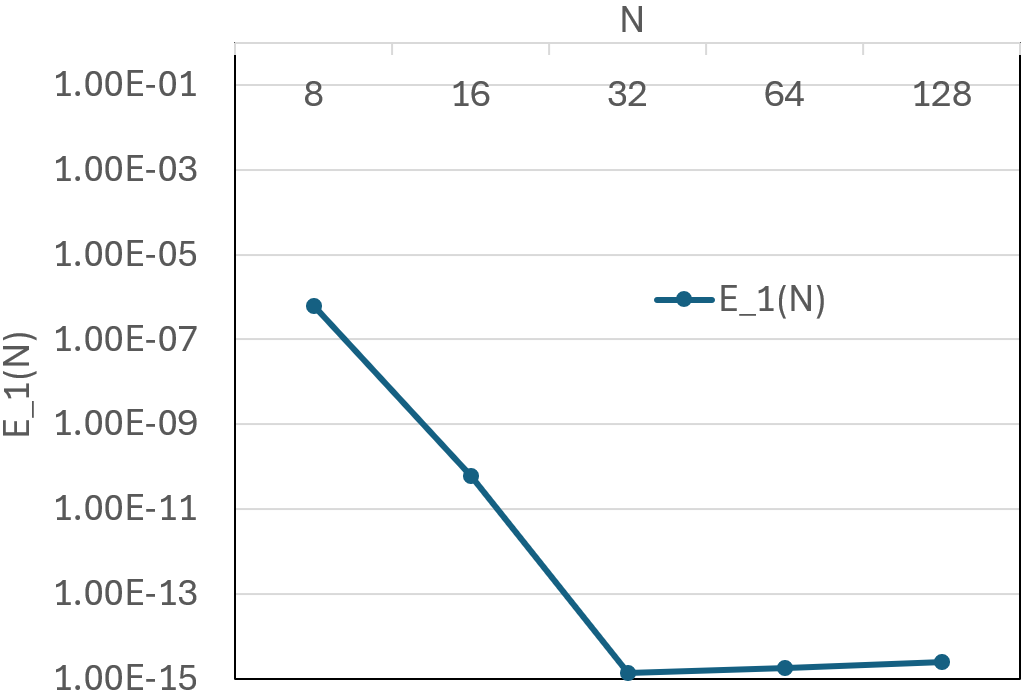}
    \caption{Error of the numerical solution as a function of $N$ for Example 1.}
    \label{fig:Ex1-1}
\end{minipage}
\hspace{3mm}
\begin{minipage}[b]{0.47\columnwidth}
    \centering
    \includegraphics[width=0.9\columnwidth]{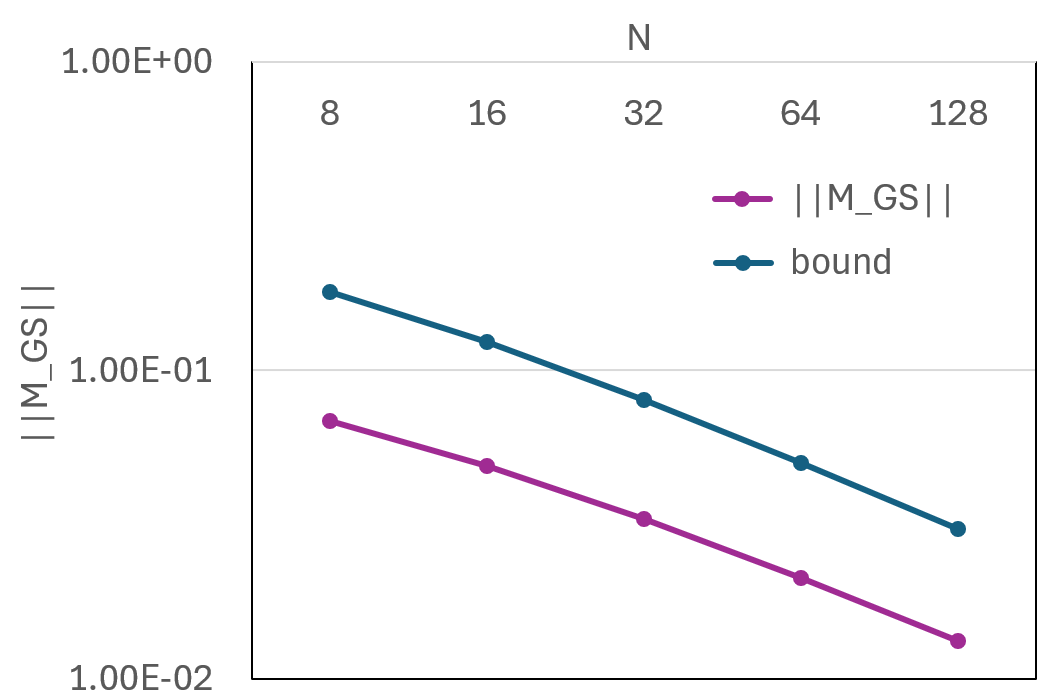}
    \caption{$\|M_{\rm GS}\|_{\infty}$ and its upper bound as a function of $N$ for Example 1.}
    \label{fig:Ex1-2}
\end{minipage}
\end{figure}

\begin{figure}[h]
\centering
\includegraphics[width=0.42\linewidth]{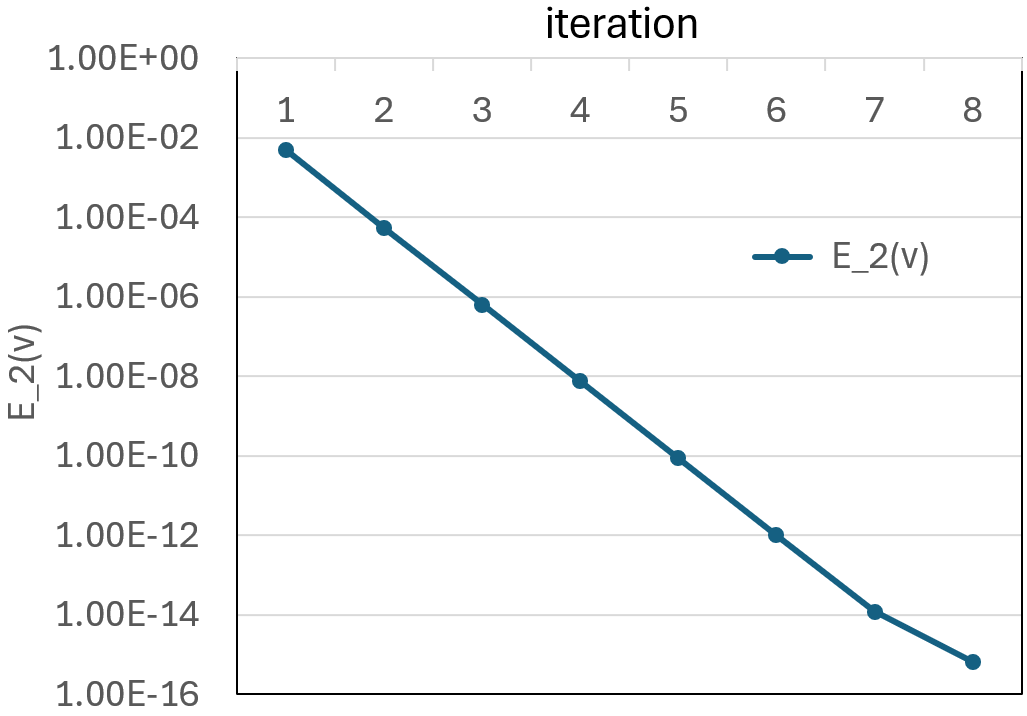}
\caption{Convergence of the Gauss-Seidel type iteration for Example 1.}
\label{fig:Ex1-3}
\end{figure}

\paragraph{Example 2}
The second example is the following initial value problem arising from spatial discretization of the heat equation using $n$ points. We assume that $n$ is odd.
\begin{align}
& \frac{{\rm d}{\bm x}}{{\rm d}t}=A{\bm x} \quad (0\le t\le 1/8), \quad {\bm x}(0)={\bm x}_0 \in\mathbb{R}^n, \nonumber \\
& A=\left(
\begin{array}{cccc}
-2 & 1 & & \\
1 & -2 & \ddots & \\
& \ddots & \ddots & 1 \\
& & 1 & -2
\end{array}
\right) \in\mathbb{R}^{n\times n}, \quad
({\bm x}_0)_k= \begin{cases}
1 & (k=(n+1)/2), \\
0 & \mbox{(otherwise)},
\end{cases}
\label{eq:Ex2-1}
\end{align}
where $({\bm x}_0)_i$ is the $i$th element of ${\bm x}_0$. Since $\|A\|_{\infty}=4$, we can choose $L=4$ and Assumption \ref{Assumption3-1} is satisfied with $\rho=1$, $M=6$\footnote{Consider the case of $({\bm x})_{(n+1)/2}=2$ and $({\bm x})_{(n+1)/2-1}=({\bm x})_{(n+1)/2+1}=-1$.} and $w\simeq b-a=\frac{1}{8}\le \rho/M$. Also, the condition $1.1L(b-a)=4\cdot\frac{1.1}{8}<1$ is met. The value of $L(b-a)$ is the same as that in Example 1, so we have $\|M_{\rm GS}\|_{\infty}<1$, which fulfills the condition of Theorem \ref{Th3-1}. Noting that the eigenvalues of $A$ are $\lambda_{\ell}=-4\sin^2\frac{\ell\pi}{2(n+1)}$ ($\ell=1,\ldots,n$) and the corresponding eigenvectors are $({\bm v}_{\ell})_k=\sqrt{\frac{2}{n+1}}\sin\frac{k\ell\pi}{n+1}$, we can write the solution of \eqref{eq:Ex2-1} as
\begin{equation}
({\bm x}(t))_k=\frac{2}{n+1}\sum_{\ell=1}^n\sin\frac{k\ell\pi}{n+1}\sin\frac{\ell\pi}{2}\exp\left(-4t\sin^2\frac{\ell\pi}{2(n+1)}\right).
\end{equation}

For this problem, $E_1(N)$ and $E_2(\nu)$ are shown in Figs.~\ref{fig:Ex2-1} and \ref{fig:Ex2-3}, respectively. Note that the matrix $M_{\rm GS}$ for this problem is exactly the same as that for Example 1 because the matrix depends only on $N$ and $L(b-a)$. Hence, the figure corresponding to Fig.~\ref{fig:Ex1-2} is not shown. In the figures, both the results of $n=11$ and $n=101$ are shown. The behaviors of the errors look similar to those in Example 1 and seem to be independent of $n$, as suggested by Theorem \ref{Th3-2}. Note that the graphs for $n=11$ and $n=101$ are completely overlapped in Fig.~\ref{fig:Ex2-3}. It is remarkable that the error $E_1(N)$ decreases to the roundoff error level even for the case of $n=101$.

\begin{figure}[h]
\centering
\begin{minipage}[b]{0.46\columnwidth}
    \centering
    \includegraphics[width=0.9\columnwidth]{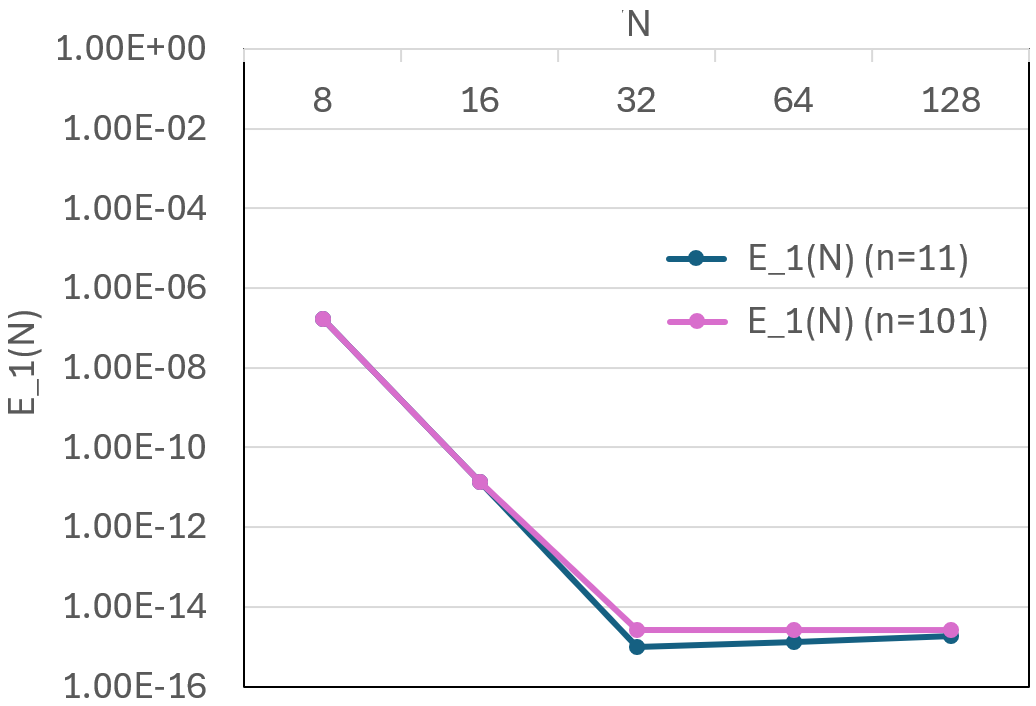}
    \caption{Error of the numerical solution as a function of $N$ for Example 2.}
    \label{fig:Ex2-1}
\end{minipage}
\hspace{3mm}
\begin{minipage}[b]{0.46\columnwidth}
    \centering
    \includegraphics[width=0.9\columnwidth]{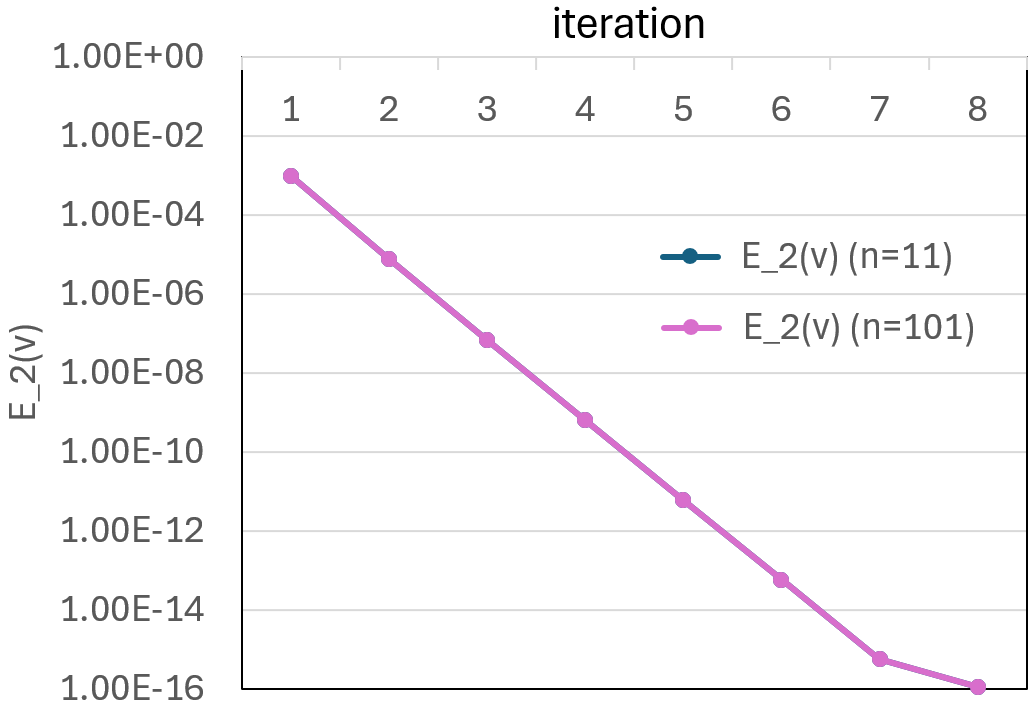}
    \caption{Convergence of the Gauss-Seidel type iteration for Example 2.}
    \label{fig:Ex2-3}
\end{minipage}
\end{figure}

\paragraph{Example 3}
The final example is the Lotka-Volterra predator-prey model with three species.
\begin{align}
\begin{dcases}
& \frac{{\rm d}x_1}{{\rm d}t}=x_1(t)x_2(t), \\
& \frac{{\rm d}x_2}{{\rm d}t}=x_2(t)( x_3(t)-x_1(t)), \\
& \frac{{\rm d}x_3}{{\rm d}t}=-x_3(t)x_2(t) \q (0\le t\le 1), \\
& x_1(0)=2, \q x_2(0)=\frac{1}{2}, \q x_3(0)=\frac{3}{2}.
\end{dcases}
\label{eq:LV3}
\end{align}
This is a system of nonlinear equations that has an analytical solution given by \eqref{eq:LV3sol1}, \eqref{eq:LV3sol2} and \eqref{eq:LV3sol3}. The derivation of this solution is explained in Appendix \ref{derivation}. In this case, if we choose $\rho=1$, then $M=(2+\rho)(\frac{1}{2}+\rho)=\frac{9}{2}$ and $L=\frac{3}{2}+4\rho=\frac{11}{2}$ from Appendix \ref{derivation}. Hence, we can set the integration interval $[a,b]$ to $[0,\rho/M]=[0,\frac{2}{9}]$. Then, $(b-a)L=\frac{11}{9}$ and we have $\|M_{\rm GS}\|_{\infty}<1$ for $N\ge 16$. In the numerical experiment, we also checked the case of $N=8$.

%${\bm x}\in\mathcal{D}_{\rho}^{\prime}$ implies $x_1\in[3/2,5/2]$, $x_2\in[0,1]$ and $x_3\in[1,2]$, which permits us to choose $M=5/2$ and $L=3/2$ and hence $b-a=\rho/M=1/5$. Thus, we set the integration interval to $[0,1/5]$. Then, $1.1L(b-a)=3.3/10<1$ and thus $\|M_{\rm GS}\|_{\infty}<1$ for $N\ge 2$.

%if we choose $\rho=1/2$, ${\bm x}\in\mathcal{D}_{\rho}^{\prime}$ implies $x_1\in[3/2,5/2]$, $x_2\in[0,1]$ and $x_3\in[1,2]$, which permits us to choose $M=5/2$ and $L=3/2$ and hence $b-a=\rho/M=1/5$. Thus, we set the integration interval to $[0,1/5]$. Then, $1.1L(b-a)=3.3/10<1$ and thus $\|M_{\rm GS}\|_{\infty}<1$ for $N\ge 2$.

%For a nonlinear system like this, it is in general difficult to find a proper value of the Lipshitz constant $L$. In this case, however, it is easy to see that $0<x_2(t)\le\frac{1}{2}$ and $|x_3(t)-x_1(t)|<\frac{5}{2}$. Accordingly, by assuming that the vector $()$

In Figs.~\ref{fig:Ex3-1}, \ref{fig:Ex3-2} and \ref{fig:Ex3-3}, we show the graphs of $E_1(N)$, $\|M_{\rm GS}\|_{\infty}$ as a function of $N$, and $E_2(\nu)$ for the case of $N=64$, respectively. While this is a nonlinear problem, these graphs show the same tendency as those of Examples 1 and 2, illustrating the validity of the analysis given in the previous section. 

\begin{figure}[h]
\centering
\begin{minipage}[b]{0.47\columnwidth}
    \centering
    \includegraphics[width=0.9\columnwidth]{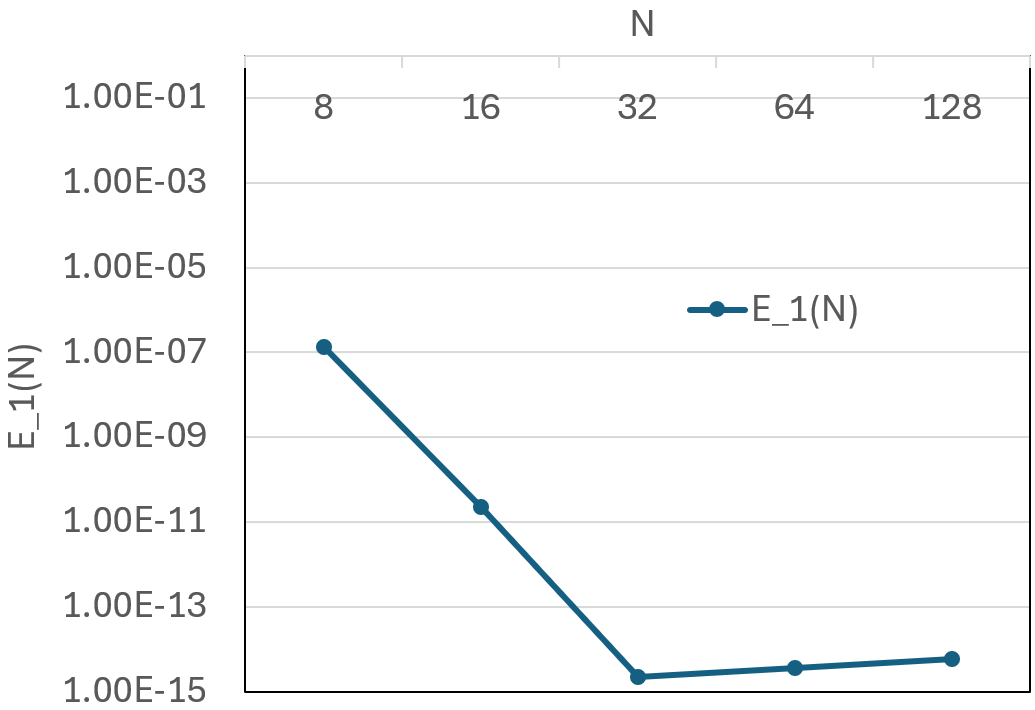}
    \caption{Error of the numerical solution as a function of $N$ for Example 3.}
    \label{fig:Ex3-1}
\end{minipage}
\hspace{3mm}
\begin{minipage}[b]{0.47\columnwidth}
    \centering
    \includegraphics[width=0.9\columnwidth]{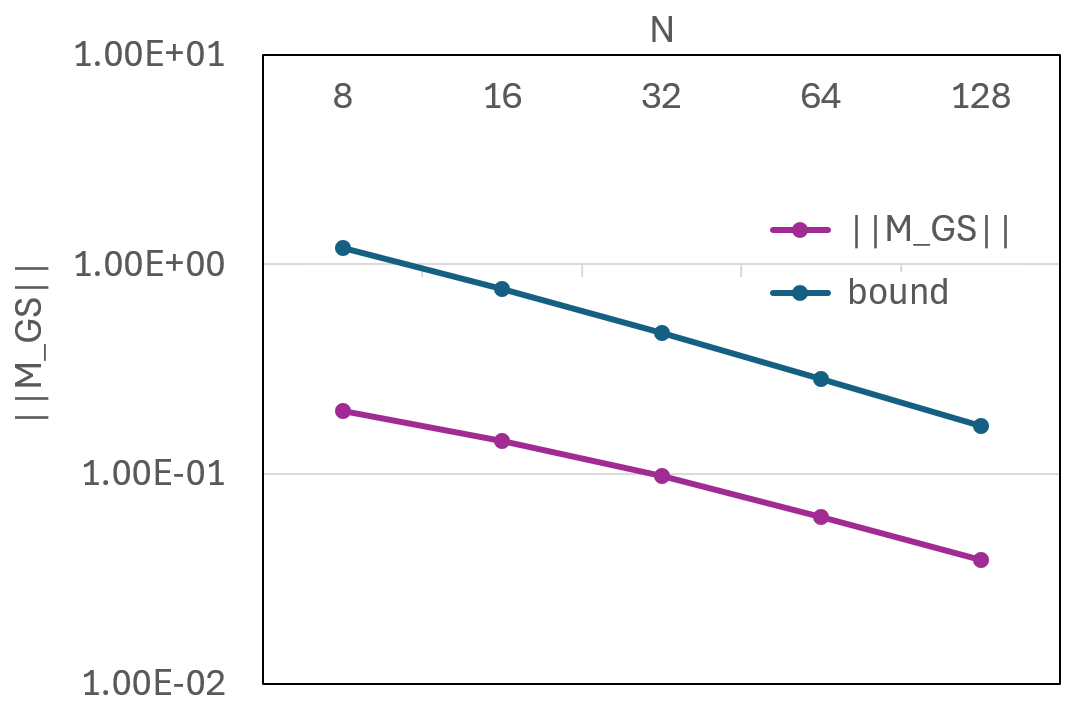}
    \caption{$\|M_{\rm GS}\|_{\infty}$ and its upper bound as a function of $N$ for Example 3.}
    \label{fig:Ex3-2}
\end{minipage}
\end{figure}

\begin{figure}[h]
\centering
\includegraphics[width=0.42\linewidth]{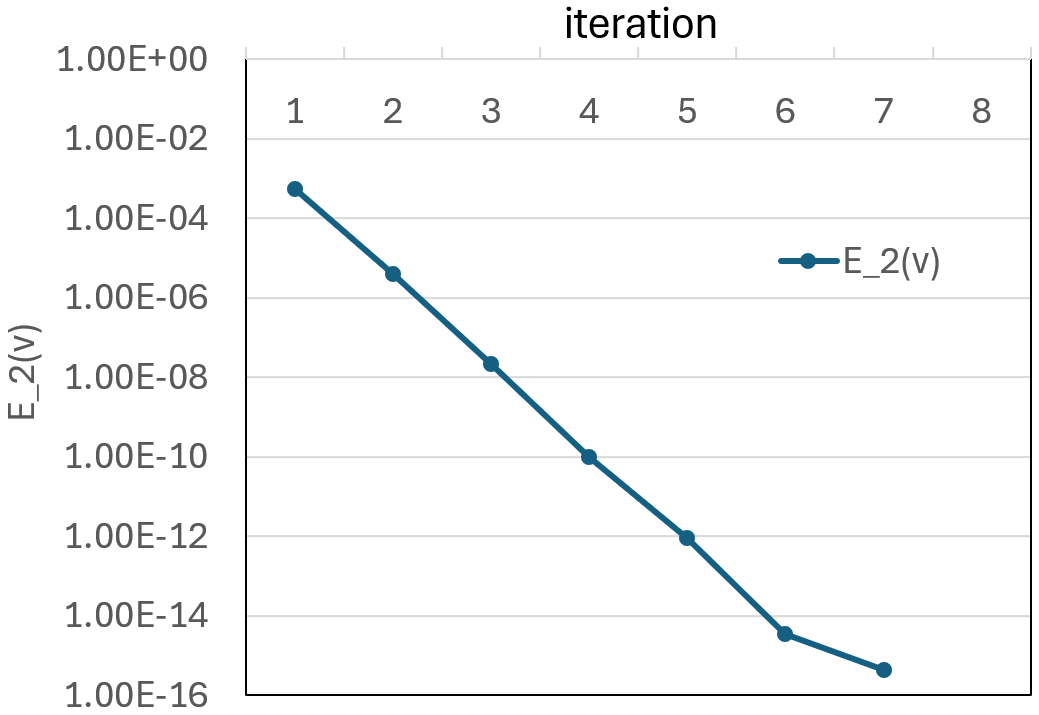}
\caption{Convergence of the Gauss-Seidel type iteration for Example 3.}
\label{fig:Ex3-3}
\end{figure}

\section{Conclusion}
\label{Conclusion}
In this paper, we analyzed the convergence of Gauss-Seidel type iteration for solving nonlinear simultaneous equations arising from the double-exponential Sinc-collocation method for initial value problems. We derived a sufficient condition for convergence of this iteration and also provided an upper bound on its rate of convergence, which explains the rapid convergence of this iteration. Our theoretical results are supported by numerical examples. Our future work includes extending this analysis to other methods such as the IMT-DE-based method recently proposed by Ogata.

%\bibliography{sn-bibliography}% common bib file
%% if required, the content of .bbl file can be included here once bbl is generated
%%\input sn-article.bbl

\section*{Acknowledgements}
We are grateful to Professor Tomoaki Okayama of Hiroshima City University
and Professor Hidenori Ogata of The University of Electro-Communications
for valuable comments.  In particular, the claim of Lemma~\ref{Lem_diff_DE_monotone} was suggested
by Professor Okayama.

\section*{Funding and/or Conflicts of interests/Competing interests}
This study is partially supported by JSPS KAKENHI Grant Numbers 22K19772, 25H00449, 25K03124, 24K00536, and 24K00540. 
There is no conflict of interest to declare.

\begin{appendices}
\section{Proof of the lemma used in subsection \ref{convergence_rate}}

\begin{lemma}
\label{Lem_diff_DE_monotone}
The function 
\[
f(x) =
\frac{\frac{\pi}{2}\cosh(x)}{\cosh^2\left(\frac{\pi}{2}\sinh(x)\right)}
\]
is even on $\mathbb{R}$ and monotone decreasing on $\{ x \in \mathbb{R} \mid  x \geq 0\}$. 
\end{lemma}

\begin{proof}
It is trivial that $f$ is even. 
Below we prove the monotonicity for $x \geq 0$. 
By letting $u(x) = \frac{\pi}{2} \sinh(x)$, we have
\[
f'(x) = 
\frac{\pi}{2} \cdot
\frac{\sinh(x) \cosh(u(x)) - \pi \cosh^{2}(x) \sinh(u(x))}{\cosh^{3}(u(x))}.
\]
Since the sign of the right-hand side is equal to that of its numerator, 
the inequality $f'(x) \leq 0$ is equivalent to that 
\begin{align}
\frac{\sinh(x)}{\pi \cosh^{2}(x)} \leq \tanh(u(x)). 
\notag
\end{align}
By letting $t = \sinh(x)$,
this inequality is equivalent to that
\begin{align}
\frac{t}{\pi (1+t^{2})} \leq \tanh \left( \frac{\pi}{2} t \right). 
\label{eq:ineq_equiv_concl}
\end{align}
Therefore it suffices to prove this inequality for any $t \geq 0$.

Let $g(t)$ be defined by $g(t) = \frac{t}{\pi (1+t^{2})}$ for $t \geq 0$. 
Then we have
\begin{align}
g'(t) = \frac{1-t^{2}}{\pi(1+t^{2})^{2}}
\quad \text{and} \quad
g''(t) = \frac{2t(-t^{3} + 2t^{2} - t -2)}{\pi(1+t^{2})^{3}}.
\notag
\end{align}
Therefore $g(t)$ attains its maximum value $\frac{1}{2\pi}$ at $t = 1$. 
In addition, 
for $t \in [0, 1]$, 
we have $-t^{3} + 2t^{2} - t -2 \leq -t^{3} + 2 - t - 2 = -t^{3} - t \leq 0$, 
which implies $g''(t) \leq 0$. 
Therefore $g(t)$ is concave on $[0,1]$ and we have
\begin{align}
g(t) \leq \frac{1}{\pi} t \quad (=g'(0)t).
\label{eq:key_ineq_g_t_on_0_1}
\end{align}
for $t \in [0,1]$. 
Next we show 
\begin{align}
\frac{1}{\pi} < \frac{\mathrm{e}^{\pi} - 1}{\mathrm{e}^{\pi} + 1}
\quad 
\left( = \tanh \left( \frac{\pi}{2} \right) \right).
\label{eq:key_ineq_1_pi_tanh}
\end{align}
Indeed, this inequality is equivalent to that 
\(
\mathrm{e}^{\pi} > \frac{\pi + 1}{\pi-1}
\), which is shown as
\[
\mathrm{e}^{\pi}
\geq 1 + 3 + \frac{3^2}{2!} + \frac{3^3}{3!}
= 13 > 2 = 
\frac{3+1}{3-1}
\geq 
\frac{\pi + 1}{\pi-1}. 
\]
Then, 
by combining~\eqref{eq:key_ineq_g_t_on_0_1}, \eqref{eq:key_ineq_1_pi_tanh}, 
and the concavity of $t \mapsto \tanh(\frac{\pi}{2}t)$ on $[0,1]$, 
we have
\begin{align}
g(t) \leq \frac{1}{\pi} t \leq \tanh \left( \frac{\pi}{2} \right) t \leq \tanh \left( \frac{\pi}{2} t \right)
\notag
\end{align}
for any $t \in [0,1]$. 
This implies inequality~\eqref{eq:ineq_equiv_concl} for $t \in [0,1]$.
For $t \geq 1$, $g(t)$ is decreasing and $t \mapsto \tanh(\frac{\pi}{2}t)$ is increasing. 
Therefore inequality~\eqref{eq:ineq_equiv_concl} also holds for  $t \geq 1$. 
Thus we obtain inequality~\eqref{eq:ineq_equiv_concl} for  any $t \geq 0$.
This completes the proof of the lemma.
\end{proof}

\begin{lemma}
\label{LemA-1}
When $x>0$, the following inequality holds.
\begin{equation}
\left|\frac{\pi}{2}-{\rm Si}(x)\right| \le \frac{1}{x}.
\end{equation}
\end{lemma}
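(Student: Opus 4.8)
The plan is to recognize the quantity $\frac{\pi}{2}-{\rm Si}(x)$ as the tail of a conditionally convergent integral and then to rewrite that tail in a form to which a sharp bound applies. Since ${\rm Si}(x)=\int_0^x(\sin t/t)\,{\rm d}t$ and $\lim_{x\to\infty}{\rm Si}(x)=\frac{\pi}{2}$, for $x>0$ we have
\begin{equation}
\frac{\pi}{2}-{\rm Si}(x)=\int_x^{\infty}\frac{\sin t}{t}\,{\rm d}t,
\end{equation}
so the claim reduces to showing $\left|\int_x^{\infty}(\sin t/t)\,{\rm d}t\right|\le\frac{1}{x}$. A direct integration by parts (taking $u=1/t$, ${\rm d}v=\sin t\,{\rm d}t$) only yields the weaker constant $2/x$, so a sharper representation is needed.

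First I would insert the Laplace-type identity $\frac{1}{t}=\int_0^{\infty}e^{-tu}\,{\rm d}u$ (valid for $t>0$) and interchange the order of integration, reducing the inner integral in $t$ to an elementary one. Using $\int e^{-tu}\sin t\,{\rm d}t=-\frac{e^{-tu}(u\sin t+\cos t)}{1+u^2}$ and evaluating from $t=x$ to $t=\infty$ gives, for each fixed $u>0$, the identity $\int_x^{\infty}e^{-tu}\sin t\,{\rm d}t=\frac{e^{-xu}(u\sin x+\cos x)}{1+u^2}$. I thus obtain the representation
\begin{equation}
\int_x^{\infty}\frac{\sin t}{t}\,{\rm d}t=\int_0^{\infty}\frac{e^{-xu}(u\sin x+\cos x)}{1+u^2}\,{\rm d}u.
\end{equation}

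The bound then follows from the Cauchy--Schwarz inequality applied to the numerator: treating $(u,1)$ and $(\sin x,\cos x)$ as two-dimensional vectors gives $|u\sin x+\cos x|\le\sqrt{1+u^2}\cdot\sqrt{\sin^2 x+\cos^2 x}=\sqrt{1+u^2}$. Substituting this into the representation and simplifying $\sqrt{1+u^2}/(1+u^2)=1/\sqrt{1+u^2}\le 1$ yields
\begin{equation}
\left|\int_x^{\infty}\frac{\sin t}{t}\,{\rm d}t\right|\le\int_0^{\infty}\frac{e^{-xu}}{\sqrt{1+u^2}}\,{\rm d}u\le\int_0^{\infty}e^{-xu}\,{\rm d}u=\frac{1}{x},
\end{equation}
which is exactly the desired inequality.

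The main obstacle is the justification of the interchange of the order of integration, since $\int_x^{\infty}(\sin t/t)\,{\rm d}t$ converges only conditionally and the double integrand is not absolutely integrable over $[0,\infty)\times[x,\infty)$. I would handle this by first truncating the outer integral to a finite interval $[x,R]$, on which Fubini's theorem applies: there $\int_x^R e^{-tu}\,{\rm d}t\le\min\{R-x,\,e^{-xu}/u\}$, which is integrable in $u$ over $(0,\infty)$ (use the bound $R-x$ near $u=0$ and the bound $e^{-xu}/u$ for large $u$), so the iterated integral of the absolute value is finite. I would then pass to the limit $R\to\infty$ by a standard dominated-convergence argument. This follows the route indicated in the cited reference \cite{stackexchange}.
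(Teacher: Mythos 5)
Your proposal is correct and follows essentially the same route as the paper's proof: both reduce the claim to bounding $\int_x^{\infty}(\sin t/t)\,{\rm d}t$, establish the representation $\int_0^{\infty}\frac{(u\sin x+\cos x)e^{-xu}}{1+u^2}\,{\rm d}u$, and conclude via the Cauchy--Schwarz bound $|u\sin x+\cos x|\le\sqrt{1+u^2}$ followed by $\int_0^{\infty}e^{-xu}\,{\rm d}u=1/x$. The only difference is presentational: you derive the Laplace-type representation yourself via Fubini (with a careful justification of the interchange), whereas the paper imports the equivalent identities from Abramowitz--Stegun, so your version is, if anything, more self-contained.
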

\begin{proof}
The proof that follows is a detailed version of the proof given in \cite{stackexchange}. First, recall the following two equalities from \cite[Eqs.~5.2.12 and 5.2.13]{Abramowitz65}.
\begin{align}
\int_0^{\infty}\frac{\sin s}{x+s}\,{\rm d}s &= \int_0^{\infty}\frac{e^{-xs}}{1+s^2}\,{\rm d}s, \\
\int_0^{\infty}\frac{\cos s}{x+s}\,{\rm d}s &= \int_0^{\infty}\frac{se^{-xs}}{1+s^2}\,{\rm d}s.
\end{align}
Using these, we have
\begin{align}
\frac{\pi}{2}-{\rm Si}(x) &= \int_0^{\infty}\frac{\sin t}{t}\,{\rm d}t-\int_0^x\frac{\sin t}{t}\,{\rm d}t \nonumber \\
&= \int_x^{\infty}\frac{\sin t}{t}\,{\rm d}t \nonumber \\
&= \int_0^{\infty}\frac{\sin x\cos s+\cos x\sin s}{x+s}\,{\rm d}s \nonumber \\
&= \int_0^{\infty}\frac{\sin x\cos s+\cos x\sin s}{x+s}\,{\rm d}s \nonumber \\
&= \int_0^{\infty}\frac{s\sin x+\cos x}{1+s^2}\,e^{-xs}\,{\rm d}s.
\end{align}
Taking the absolute values of both sides and applying the Cauchy-Schwarz inequality to the numerator of the integrand in the right-hand side gives
\begin{align}
\left|\frac{\pi}{2}-{\rm Si}(x)\right| &\le \int_0^{\infty}\frac{\sqrt{1+s^2}\sqrt{\sin^2 x+\cos^2 x}}{1+s^2}\,e^{-xs}\,{\rm d}s \nonumber \\
&= \int_0^{\infty}\frac{e^{-xs}}{\sqrt{1+s^2}}\,{\rm d}s \nonumber \\
&\le \int_0^{\infty}e^{-xs}\,{\rm d}s = \frac{1}{x},
\end{align}
which proves the lemma.
\end{proof}

\section{Exact solution of the Lotka-Volterra equation with multiple species}
\label{derivation}
In this appendix, we explain how to derive an exact solution to the Lotka-Volterra equation with $2m-1$ species:
\begin{align}
\begin{dcases}
& \frac{{\rm d}x_k}{{\rm d}t}=x_k(t)( x_{k+1}(t)-x_{k-1}(t)) \q (k=1, 2, \ldots, 2m-1), \\
& x_k(0)=x_k^{(0)} \q (k=1, 2, \ldots, 2m-1), \\
& x_0(t)\equiv 0, \q x_{2m}(t)\equiv 0,
\end{dcases}
\label{eq:LV}
\end{align}
which is used as a test problem in the numerical experiments section. Our strategy is to construct a solution to \eqref{eq:LV} from that of the Toda lattice equation:
\begin{align}
\begin{dcases}
& \frac{{\rm d}q_k}{{\rm d}t} = e_k(t)-e_{k-1}(t) \q (k=1, \ldots, m), \\
& \frac{{\rm d}e_k}{{\rm d}t} = e_k(t)(q_{k+1}(t)-q_k(t)) \q (k=1, \ldots, m-1), \\
& q_k(0)=q_k^{(0)} \q (k=1, \ldots, m), \\
& e_k(t)=e_k^{(0)} \q (k=1, \ldots, m-1), \\
& e_0(t)\equiv 0, \q e_m(t)\equiv 0.
\end{dcases}
\label{eq:Toda}
\end{align}
The following lemma holds.
\begin{lemma}
\label{LemB-1}
Let $\{q_k(t)\}_{k=1}^m$ and $\{e_k(t)\}_{k=1}^{m-1}$ be the solution to the Toda lattice equation \eqref{eq:Toda}. Then, the functions $\{x_k(t)\}_{k=1}^{2m-1}$ defined by
\begin{equation}
\begin{dcases}
& q_k(t)=1+x_{2k-2}(t)+x_{2k-1}(t) \q (k=1, \ldots, m), \\
& e_k(t)=x_{2k-1}(t)x_{2k}(t) \q (k=1, \ldots, m-1), \\
& x_0(t)\equiv 0
\end{dcases}
\label{eq:Miura}
\end{equation}
satisfy the Lotka-Volterra equation \eqref{eq:LV}. The corresponding initial values are obtained from $\{q_k(0)\}_{k=1}^m$ and $\{e_k(0)\}_{k=1}^{m-1}$ by solving \eqref{eq:Miura} for $\{x_k(0)\}_{k=1}^{2m-1}$.
\end{lemma}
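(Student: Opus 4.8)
The plan is to verify the Lotka-Volterra equations \eqref{eq:LV} by a single induction on $k$ that alternates between the odd-indexed and even-indexed components. The starting observation is that the relations \eqref{eq:Miura}, together with $x_0\equiv 0$, determine the $x_k$ recursively from the Toda variables: $x_1=q_1-1$, and thereafter $x_{2k}=e_k/x_{2k-1}$ followed by $x_{2k+1}=q_{k+1}-1-x_{2k}$. This exhausts the $m$ relations of the first kind and the $m-1$ of the second kind, so the $2m-1$ functions $x_k$ are well defined (provided the odd-indexed ones do not vanish), and it remains only to check that they satisfy the differential equations. Rather than differentiating the inverse recursion, I would differentiate the two families in \eqref{eq:Miura} directly and feed in the Toda equations \eqref{eq:Toda}.

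For the odd components, differentiating $q_k=1+x_{2k-2}+x_{2k-1}$ gives $\dot q_k=\dot x_{2k-2}+\dot x_{2k-1}$; substituting the Toda relation $\dot q_k=e_k-e_{k-1}=x_{2k-1}x_{2k}-x_{2k-3}x_{2k-2}$ and the even-indexed Lotka-Volterra equation at index $k-1$ (already available from the induction hypothesis, in the form $\dot x_{2k-2}=x_{2k-2}(x_{2k-1}-x_{2k-3})$) lets me solve for $\dot x_{2k-1}$, and a short telescoping cancellation yields exactly $\dot x_{2k-1}=x_{2k-1}(x_{2k}-x_{2k-2})$. Symmetrically, for the even components I would differentiate $e_k=x_{2k-1}x_{2k}$, substitute $\dot e_k=e_k(q_{k+1}-q_k)$ with $q_{k+1}-q_k=x_{2k}+x_{2k+1}-x_{2k-2}-x_{2k-1}$ obtained from \eqref{eq:Miura}, insert the just-proved odd equation at index $k$, and solve for $\dot x_{2k}$, arriving at $\dot x_{2k}=x_{2k}(x_{2k+1}-x_{2k-1})$. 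The induction thus proceeds in the order odd $k=1$, even $k=1$, odd $k=2$, even $k=2$, and so on, with each step depending only on its immediate predecessor.

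The boundary cases close the argument and are handled by the same computation together with the conventions built into \eqref{eq:Toda} and \eqref{eq:LV}. For $k=1$ one uses $x_0\equiv 0$ and $e_0\equiv 0$, which makes the base case $\dot x_1=e_1=x_1x_2$ immediate. The final odd equation, at $k=m$, uses $e_m\equiv 0$ so that $\dot q_m=-e_{m-1}=-x_{2m-3}x_{2m-2}$, and the same cancellation produces $\dot x_{2m-1}=-x_{2m-1}x_{2m-2}=x_{2m-1}(x_{2m}-x_{2m-2})$, consistent with $x_{2m}\equiv 0$. The main obstacle I anticipate is not the algebra, which is a routine cancellation, but the division by $x_{2k-1}$ needed to isolate $\dot x_{2k}$ in the even step: this requires $x_{2k-1}(t)\neq 0$ on the interval of interest. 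I would justify it from the well-definedness of the transformation itself, since the very definition $x_{2k}=e_k/x_{2k-1}$ already presupposes $x_{2k-1}\neq 0$, or, for the concrete initial data used in Example 3, from positivity of the Toda quantities $e_k>0$, which keeps the relevant $x_{2k-1}$ bounded away from zero.
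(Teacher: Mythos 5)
Your proposal is correct and takes essentially the same approach as the paper: the same alternating induction in which differentiating the Miura relations, inserting the Toda equations, and using the previously established Lotka--Volterra equation isolates first one component and then the next, with the even step requiring division by $x_{2k-1}\neq 0$, justified exactly as in the paper by the well-definedness of the transformation. The only cosmetic differences are your index bookkeeping and your explicit treatment of the boundary case $k=m$, which the paper absorbs into the conventions $e_m\equiv 0$ and $x_{2m}\equiv 0$.
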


\begin{remark}
We note that \eqref{eq:Miura} actually defines $\{x_k(t)\}_{k=1}^{2m-1}$ for generic $\{q_k(t)\}_{k=1}^m$ and $\{e_k(t)\}_{k=1}^{m-1}$. In fact, since $x_0(t)=0$, we have $x_1(t)=q_1(t)-1$ from the first equation of \eqref{eq:Miura}. Now, assume that $\{x_{\ell}(t)\}_{\ell=1}^{2k-1}$ have been determined. Then, if $x_{2k-1}(t)\ne 0$, we have $x_{2k}(t)=e_k(t)/x_{2k-1}(t)$ from the second equation of \eqref{eq:Miura} and then $x_{2k+1}(t)=q_{k+1}(t)-x_{2k}(t)-1$ from the first equation of \eqref{eq:Miura}. Hence, the claim holds by induction. Eq.~\eqref{eq:Miura} is a continuous-time version of the so-called Miura transformation between the Toda lattice equation and the Lotka-Volterra equation \cite{Hirota95}.
\end{remark}
\begin{proof}
For brevity, we denote $x_k(t)$ by $x_k$ and ${\rm d}x_k/{\rm d}t$ by $\dot{x}_k$. First, by substituting \eqref{eq:Miura} into the first equation of \eqref{eq:Toda} with $k=1$ and noting that $x_0(t)=0$ and $e_0(t)=0$, we obtain
\begin{equation}
\dot{x}_1=x_1 x_2,
\end{equation}
which is the first equation of \eqref{eq:LV} with $k=1$. Now, assume that $\{x_{\ell}(t)\}_{\ell=1}^{2k-1}$ have been shown to satisfy the first equation of \eqref{eq:LV}. Then, substituting \eqref{eq:Miura} into the second equation of \eqref{eq:Toda}, we have
\begin{equation}
\frac{\rm d}{{\rm d}t}(x_{2k-1}x_{2k})=x_{2k-1}x_{2k}\left\{(1+x_{2k}+x_{2k+1})-(1+x_{2k-2}+x_{2k-1})\right\},
\end{equation}
or
\begin{equation}
\dot{x}_{2k-1}x_{2k}+ x_{2k-1}\dot{x}_{2k}=x_{2k-1}x_{2k}(x_{2k}+x_{2k+1}-x_{2k-2}-x_{2k-1})
\end{equation}
Using $\dot{x}_{2k-1}=x_{2k-1}(x_{2k}-x_{2k-2})$, we can rewrite this as
\begin{equation}
x_{2k-1}(x_{2k}-x_{2k-2})x_{2k}+ x_{2k-1}\dot{x}_{2k}=x_{2k-1}x_{2k}(x_{2k}+x_{2k+1}-x_{2k-2}-x_{2k-1}),
\end{equation}
or
\begin{equation}
x_{2k-1}\dot{x}_{2k}=x_{2k-1}x_{2k}(x_{2k+1}-x_{2k-1})
\end{equation}
If $x_{2k-1}(t)\ne 0$ (note that this is a necessary condition for the transformation \eqref{eq:Miura} to be well-defined), we can divide both sides by $x_{2k-1}$ to obtain
\begin{equation}
\dot{x}_{2k}=x_{2k}(x_{2k+1}-x_{2k-1}), \label{eq:u2kdot}
\end{equation}
which is the first equation of \eqref{eq:LV} for $2k$. Further, substituting \eqref{eq:Miura} into the first equation of \eqref{eq:Toda} for $q_{k+1}$ gives
\begin{equation}
\frac{{\rm d}}{{\rm d}t}(1+x_{2k}+x_{2k+1})=x_{2k+1}x_{2k+2}-x_{2k-1}x_{2k}.
\end{equation}
Substituting \eqref{eq:u2kdot} into the left-hand side yields
\begin{equation}
\dot{x}_{2k+1}=x_{2k+1}(x_{2k+2}-x_{2k}),
\end{equation}
which is the first equation of \eqref{eq:LV} for $2k+1$. Hence, we have shown that $\{x_{\ell}(t)\}_{\ell=1}^{2k+1}$ satisfy the first equation of \eqref{eq:LV}. Continuing this process until $k=m-1$, we can show that $\{x_{\ell}(t)\}_{\ell=1}^{2m-1}$ satisfy the Lotka-Volterra equation. The statement about the initial values follows naturally because $\{q_k(t)\}_{k=1}^m$ and $\{e_k(t)\}_{k=1}^{m-1}$ are connected with $\{x_k(t)\}_{k=1}^{2m-1}$ through \eqref{eq:Miura} for any $t$.
\end{proof}

Next, we describe how to solve the initial value problem of the Toda lattice equation \eqref{eq:Toda}. Let us define an $m\times m$ tridiagonal matrix $A=A(t)$ and its strictly lower triangular part $A_-=A_-(t)$ by
\begin{equation}
A=\left(\begin{array}{cccc}
q_1 & 1 & & \\
e_1 & q_2 & \ddots & \\
& \ddots & \ddots & 1 \\
& & e_{m-1} & q_m
\end{array}
\right), \q
A_-=\left(\begin{array}{cccc}
& & & \\
e_1 & & & \\
& \ddots & & \\
& & e_{m-1} &
\end{array}
\right).
\end{equation}
Then, it can easily be verified that the matrix differential equation
\begin{equation}
\frac{{\rm d}A}{{\rm d}t} =[A,A_-],
\label{eq:Lax}
\end{equation}
where $[A,B]=AB-BA$, is equivalent to the Toda lattice equation \eqref{eq:Toda}. More specifically, the differential equations for $q_k(t)$ arise from the equalities among the diagonal elements of \eqref{eq:Lax}, while those for $e_k(t)$ arise from the equalities among the lower subdiagonal elements of \eqref{eq:Lax}. Eq.~\eqref{eq:Lax} is called the Lax form of \eqref{eq:Toda}.

Once expressed in the form of \eqref{eq:Lax}, the Toda lattice equation can be solved by the group theoretic method \cite{Kostant79,Nakamura18}. The procedure to compute $\{q_k(t)\}_{k=1}^m$ and $\{e_k(t)\}_{k=1}^{m-1}$ for given $t$ is as follows.
\begin{itemize}
\item[(i)] Construct $A(0)$ from $\{q_k(0)\}_{k=1}^m$ and $\{e_k(0)\}_{k=1}^{m-1}$.
\item[(ii)] Compute $\exp(tA(0))$ and its $LR$ decomposition $\exp(tA(0))=L_t R_t$, where $L_t$ is a unit lower subdiagonal matrix and $R_t$ is an upper bidiagonal matrix.
\item[(iii)] Let $A(t)\equiv L_t^{-1}A(0)L_t$.
\item[(iv)] Obtain $\{q_k(t)\}_{k=1}^m$ and $\{e_k(t)\}_{k=1}^{m-1}$ as the diagonal elements and the lower subdiagonal elements of $A(t)$, respectively.
\end{itemize}
By combining this procedure with Lemma \ref{LemB-1}, we can construct an exact solution to the Lotka-Volterra equation.

For general $m$ and $A(0)$, the matrix exponential $\exp(tA(0))$ has to be calculated numerically. However, when $m=2$, everything can be computed analytically. Below, we will give a sample of such cases.

\paragraph{Example 1}
Let us consider the case of $m=2$, $q_1^{(0)}=q_2^{(0)}=3$ and $e_1^{(0)}=1$. Then, we have
\begin{equation}
A(0)=\left(\begin{array}{cc}
3 & 1 \\
1 & 3
\end{array}
\right)
=\left(\begin{array}{cc}
\frac{1}{\sqrt{2}} & \frac{1}{\sqrt{2}} \\
\frac{1}{\sqrt{2}} & -\frac{1}{\sqrt{2}}
\end{array}
\right)
\left(\begin{array}{cc}
4 & 0 \\
0 & 2
\end{array}
\right)
\left(\begin{array}{cc}
\frac{1}{\sqrt{2}} & \frac{1}{\sqrt{2}} \\
\frac{1}{\sqrt{2}} & -\frac{1}{\sqrt{2}}
\end{array}
\right)^{-1}
\end{equation}
and
\begin{equation}
\exp(tA(0))= \left(\begin{array}{cc}
\frac{1}{\sqrt{2}} & \frac{1}{\sqrt{2}} \\
\frac{1}{\sqrt{2}} & -\frac{1}{\sqrt{2}}
\end{array}
\right)
\left(\begin{array}{cc}
e^{4t} & 0 \\
0 & e^{2t}
\end{array}
\right)
\left(\begin{array}{cc}
\frac{1}{\sqrt{2}} & \frac{1}{\sqrt{2}} \\
\frac{1}{\sqrt{2}} & -\frac{1}{\sqrt{2}}
\end{array}
\right)^{-1}
=\frac{1}{2}\left(\begin{array}{cc}
e^{4t}+e^{2t} & e^{4t}-e^{2t} \\
e^{4t}-e^{2t} & e^{4t}+e^{2t}
\end{array}
\right).
\end{equation}
Thus,
\begin{equation}
\exp(tA(0))
=\left(\begin{array}{cc}
1 & 0 \\
\tanh t & 1
\end{array}
\right)
\left(\begin{array}{cc}
\frac{e^{4t}+e^{2t}}{2} & \frac{e^{4t}-e^{2t}}{2} \\
0 & \frac{e^{4t}+e^{2t}}{2}-\frac{(e^{4t}-e^{2t})^2}{2(e^{4t}+e^{2t})}
\end{array}
\right)=L_t R_t
\end{equation}
and
\begin{align}
A(t) = L_t^{-1}A(0)L_t
&= \left(\begin{array}{cc}
1 & 0 \\
-\tanh t & 1
\end{array}
\right)
\left(\begin{array}{cc}
3 & 1 \\
1 & 3
\end{array}
\right)
\left(\begin{array}{cc}
1 & 0 \\
\tanh t & 1
\end{array}
\right) \nonumber \\
&= \left(\begin{array}{cc}
3+\tanh t & 1 \\
\frac{1}{\cosh^2 t} & 3-\tanh t
\end{array}
\right).
\end{align}
This corresponds to the solution:
\begin{equation}
q_1(t)= 3+\tanh t, \q q_2(t)= 3-\tanh t, \q e_1(t)=\frac{1}{\cosh^2 t}.
\end{equation}
It is easy to verify that these functions actually satisfy the Toda lattice equation \eqref{eq:Toda}.

Furthermore, by invoking Lemma \ref{LemB-1}, we can construct the corresponding solution of the Lotka-Volterra equation as follows.
\begin{align}
x_1(t) &= q_1(t)-1 = 2+\tanh t, \label{eq:LV3sol1} \\
x_2(t) &= \frac{e_1(t)}{u_1(t)} = \frac{1}{\cosh t(2\cosh t+\sinh t)}, \label{eq:LV3sol2} \\
x_3(t) &= q_2(t)-1-u_2(t)=2-\tanh t-\frac{1}{\cosh t(2\cosh t+\sinh t)}. \label{eq:LV3sol3}
\end{align}
This is a solution corresponding to the initial condition $x_1^{(0)}=2$, $x_2^{(0)}=\frac{1}{2}$ and $x_3^{(0)}=\frac{3}{2}$. We used this solution in Section \ref{Results}.

We will also calculate the constants $M$ and $L$ for use in the numerical experiments. Let us assume that $\frac{1}{2}\le \rho\le \frac{3}{2}$ for simplicity.  The region $\mathcal{D}_{\rho}$ is given as
\begin{equation}
\mathcal{D}_{\rho}=\left\{(t,{\bm x})\,\left|\,t\in[a,b], x_1\in\left[2-\rho, 2+\rho\right], x_2\in\left[\frac{1}{2}-\rho, \frac{1}{2}+\rho\right], x_3\in\left[\frac{3}{2}-\rho, \frac{3}{2}+\rho\right]\right.\right\}.
\end{equation}
Then it is easy to see that $M=(2+\rho)(\frac{1}{2}+\rho)$, which is attained when $x_1=2+\rho$ and $x_2=\frac{1}{2}+\rho$. Now, let ${\bm x}=(x_1,x_2,x_3)^{\top}$ and ${\bm x}'=(x_1^{\prime},x_2^{\prime},x_3^{\prime})^{\top}$. Then,
\begin{align}
|f_1(t,{\bm x})-f_1(t,{\bm x}')| &= |x_1 x_2-x_1^{\prime}x_2^{\prime}| \le |x_1|\,|x_2-x_2^{\prime}| + |x_2^{\prime}|\,|x_1-x_1^{\prime}| \nonumber \\
&\le (|x_1|+|x_2^{\prime}|)\|{\bm x}-{\bm x}'\|_{\infty} \le \left(\frac{5}{2}+2\rho\right)\|{\bm x}-{\bm x}'\|_{\infty}.
\end{align}
Similarly,
\begin{align}
|f_2(t,{\bm x})-f_2(t,{\bm x}')| &\le (||x_2|+|x_3^{\prime}-x_1|+|x_2^{\prime}|)\|{\bm x}-{\bm x}'\|_{\infty} \le \left(\frac{3}{2}+4\rho\right)\|{\bm x}-{\bm x}'\|_{\infty}, \\
|f_3(t,{\bm x})-f_3(t,{\bm x}')| &\le (|x_2|+|x_3^{\prime}|)\|{\bm x}-{\bm x}'\|_{\infty} \le (2+2\rho)\|{\bm x}-{\bm x}'\|_{\infty}
\end{align}
and therefore
\begin{equation}
\|{\bm f}(t,{\bm x})-{\bm f}(t,{\bm x}')\|_{\infty} \le \left(\frac{3}{2}+4\rho\right)\|{\bm x}-{\bm x}'\|_{\infty}.
\end{equation}
Hence, we can choose $L=\frac{3}{2}+4\rho$. Note that this is the smallest possible value of $L$ in $\mathcal{D}_{\rho}$. To see this, let
\begin{equation}
x_1=2+\rho, \quad x_2=\frac{1}{2}+\rho, \quad x_3=\frac{3}{2}-\rho, \quad x_1^{\prime}=x_1-\epsilon, \quad x_2^{\prime}=x_2-\epsilon, \quad x_3^{\prime}=x_3+\epsilon,
\end{equation}
where $\epsilon>0$ is a small number.
\end{appendices}

\end{document}